 \newtheorem{thm}{Theorem}
 \newtheorem*{thm*}{Admissibility Theorem}
 \newtheorem{prop}[thm]{Proposition}
 \newtheorem{lemma}[thm]{Lemma}
 \newtheorem{cor}[thm]{Corollary}
 \theoremstyle{definition}
 \theoremstyle{remark}
 \newtheorem{remark}[thm]{Remark}
\def\Spec{{\rm Spec}\,}
\def\Spf{{\rm Spf}}
\def\Frac{{\rm Frac}~}
\def\Gr{{\rm Gr}}
\def\GL{{\rm GL}}
\def\GSp{{\rm GSp}}
\def\Ind{{\rm Ind}}
\def\rk{{\rm rk}}
\def\pr{{\rm pr}}
\def\ad{{\rm ad}}
\def\univ{{\rm univ}}
\def\dom{{\rm dom}}
\def\codim{{\rm codim}}
\def\defect{{\rm def}}
\def\dl{(\!(}
\def\dr{)\!)}
\begin{document}

\begin{title}
{Newton strata in the loop group of a reductive group}
\end{title}
\author{Eva Viehmann}
\address{Mathematisches Institut der Universit\"{a}t Bonn\\ Endenicher Allee 60\\53115 Bonn\\Germany}
\email{viehmann@math.uni-bonn.de}
\date{}

\begin{abstract}{We generalize purity of the Newton stratification to purity for a single break point of the Newton point in the context of local $G$-shtukas respectively of elements of the loop group of a reductive group. As an application we prove that elements of the loop group bounded by a given dominant coweight satisfy a generalization of Grothendieck's conjecture on deformations of $p$-divisible groups with given Newton polygons.}
\end{abstract}
\maketitle
\section{Introduction}\label{intro}
Let $\mathbb{F}_q$ be a finite extension of $\mathbb{F}_p$. All rings we consider are assumed to be $\mathbb{F}_q$-algebras. Let $G$ be a split connected reductive group over $\mathbb{F}_q$, and let $B\supseteq T$ be a Borel subgroup and a split maximal torus of $G$. We denote by $LG$ the loop group of $G$, i.e.~the ind-scheme over $\mathbb{F}_q$ representing the sheaf of groups for the fpqc-topology whose sections for an $\mathbb{F}_q$-algebra $R$ are given by $LG(R)=G(R\dl z\dr )$, see \cite{Faltings}, Definition 1. Let $K$ be the sub-group scheme of $LG$ with $K(R)=G(R[[z]])$. 

Let $k$ be a field (containing $\mathbb{F}_q$). We denote by $\sigma$ the Frobenius of $k$ over $\mathbb{F}_q$ and also of $k\dl z\dr $ over $\mathbb{F}_q\dl z\dr $. For algebraically closed $k$, the set $B(G)$ of $\sigma$-conjugacy classes $[b]=\{g^{-1}b\sigma(g)\mid g\in G(k\dl z\dr )\}$ of elements $b\in LG(k)$ is classified by two invariants, the Kottwitz point $\kappa_G(b)\in\pi_1(G)$ and the Newton point $\nu=\nu_b\in X_*(T)_{\mathbb{Q},\dom}$ (\cite{Kottwitz1}). Here $\pi_1(G)$ is the quotient of $X_*(T)$ by the coroot lattice, and $X_*(T)_{\mathbb{Q},\dom}$ is the set of elements of $X_*(T)\otimes_{\mathbb{Z}}\mathbb{Q}$ that are dominant with respect to the chosen Borel subgroup $B$. For the reformulation using $\pi_1(G)$ also compare \cite{RapoportRichartz}. We are interested in the variation of the $\sigma$-conjugacy class in families, i.e.~we want to consider an element $b\in LG(S)$ (or more generally a local $G$-shtuka on $S$) for an integral scheme $S$ and compare the $\sigma$-conjugacy classes in the various points of $S$. For the precise definition of the Newton point and the Kottwitz point in this case see Section \ref{secnot}. By \cite{HV1}, Proposition 3.4, the Kottwitz point $\kappa_G(b)$ is locally constant, hence constant on $S$. Therefore we view it as fixed, and from now on only consider the Newton point of $b$, as an element of the set of $\nu\in X_*(T)_{\mathbb{Q},\dom}$ for which there exists a $g\in LG(k)$ such that $[g]$ corresponds to $\nu$ and the fixed element of $\pi_1(G)$.

We equip $X_*(T)_{\mathbb{Q},\dom}$ with the usual partial ordering $\preceq$. By definition $\nu_1\preceq\nu_2$ if and only if the difference $\nu_2-\nu_1$ is a non-negative linear combination of positive coroots. For $\nu\in X_*(T)_{\mathbb{Q},\dom}$ and $b\in LG(S)$ let $\mathcal{N}_{\nu}\subseteq S$ be the locus where $b$ has Newton point $\nu$. By \cite{Grothendieck} resp.~\cite{RapoportRichartz} Theorem 3.6, \cite{HV1} Theorem 7.3 this defines a locally closed reduced subscheme $\mathcal{N}_{\nu}$ of $S$. Furthermore, the subscheme $\mathcal{N}_{\preceq\nu}=\bigcup_{\nu'\preceq\nu}\mathcal{N}_{\nu'}$ is closed for every $\nu$ (the so-called Grothendieck specialization theorem). The induced stratification of $S$ is called the Newton stratification associated with $b$.

An important property of the Newton stratification is its purity. A first purity theorem was shown by de Jong and Oort in \cite{JO}, Theorem 4.1 for the Newton stratification associated with an $F$-crystal. This result was later generalized by Vasiu \cite{Vasiu}, Main Theorem B in the context of $F$-crystals and by Zink \cite{Zink} for $p$-divisible groups. In \cite{HV1}, Theorem 7.4 it was noticed that the analog of Vasiu's purity theorem also holds in the function field case, i.e.~for Newton stratifications associated with elements of the loop group or with local $G$-shtukas. A generalization in a different direction was recently shown by Yang \cite{yang}, who proves an analog of de Jong and Oort's purity result, not for the Newton stratification itself but for the stratification associated with a single break point of the Newton polygon. Our first main result is a joint generalization of Yang's and Vasiu's purity results to the function field case. More precisely we show

\begin{thm}\label{thm1}
Let $S$ be an integral locally noetherian scheme and let $b\in LG(S)$. Let $j\in J(\nu)$ be a break point of the Newton point $\nu$ of $b$ at the generic point of $S$. Let $U_j$ be the open subscheme of $S$ defined by the condition that a point $x$ of $S$ lies in $U_j$ if and only if $\pr_{(j)}(\nu_{b}(x))=\pr_{(j)}(\nu)$. Then $U_j$ is an affine $S$-scheme.
\end{thm}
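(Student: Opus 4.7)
My plan is to combine the loop-group form of Vasiu's purity theorem (\cite{HV1}, Theorem 7.4) with the single-break-point refinement of Yang \cite{yang}, via a reduction from general $G$ to $GL_n$. The key geometric object is the standard parabolic $P=P_{(j)}$ of $G$ whose Levi $M=M_{(j)}$ is the centralizer of $\pr_{(j)}(\nu)$; the hypothesis that $j$ is a break point means precisely that $\pr_{(j)}(\nu)$ is central in $M$ (a rational cocharacter of the connected center of $M$).

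First I would reduce to the case $S=\Spec R$ with $R$ a complete noetherian local domain, since affineness of $U_j\to S$ is affine-local on the target and all the constructions are compatible with localization and completion. Then I would choose a faithful representation $\rho:G\hookrightarrow GL(V)$ over $\mathbb{F}_q$ whose weight decomposition on $V$ refines the slope decomposition induced by $\pr_{(j)}(\nu)$, so that $j$ corresponds to a genuine break point $j'$ of the Newton polygon of $\rho\circ b$ at the generic point. Because $\rho$ is a closed immersion, the induced morphism $L\rho$ is affine, and by functoriality of the Newton point (applied to $\rho$ restricted to suitable subgroups containing $M$) the locus $U_j$ is the preimage in $S$ of the analogous locus $U_{j'}$ for $\rho\circ b$. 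This reduces everything to the $GL_n$ case.

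In the $GL_n$ setting, $b$ corresponds to a family of $z$-isocrystals and $U_{j'}$ is the locus where the lower portion of the Newton polygon agrees with the generic one. At the generic point, the slope filtration provides a distinguished sub-$z$-isocrystal $\mathcal{F}^{\rm gen}$ whose rank and slopes are determined by $j'$. I would then realize $U_{j'}$ as the locus over which $\mathcal{F}^{\rm gen}$ extends to a sub-object of the full family, using a Quot/Grassmannian-type parameter scheme inside an appropriate affine Grassmannian of lattices. After such an extension is made, passing to the sub-isocrystal puts us into the situation of a single Newton stratum for the Levi $M_{(j')}\subset GL_n$ (since all the ``lower'' slopes are now prescribed), which is covered by HV1 Theorem 7.4. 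The affineness of the space of extensions, parametrized by a principal homogeneous space under an affine group scheme, then yields affineness of $U_{j'}\to S$.

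The main obstacle is the codimension-one purity step: showing that no component of $S\setminus U_j$ has codimension $\geq 2$. As in the arguments of de Jong--Oort, Vasiu, Zink, and Yang, the heart of this is a deformation-theoretic argument producing a slope filtration over a punctured neighborhood of a codimension-two point and showing it extends across that point. My plan is to carry this out by adapting Vasiu's machinery as used in the proof of HV1 Theorem 7.4 from the case of a ``full'' Newton polygon to the case of the initial segment up to $j$, using the centrality of $\pr_{(j)}(\nu)$ in $M_{(j)}$ and the Hodge--Newton decomposition to decouple the lower slopes and reduce to the already-known Vasiu-type statement on $M_{(j)}$.
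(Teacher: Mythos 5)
There is a genuine gap in this proposal, concentrated in two places.

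First, your final paragraph inverts the logic of the theorem. You call the ``codimension-one purity step'' the main obstacle and describe it as showing that $S\setminus U_j$ has no component of codimension $\geq 2$. But the theorem asserts that $U_j\hookrightarrow S$ is \emph{affine}, which is strictly stronger than codimension purity: affineness of the complement implies the codimension-one statement (this is how the paper deduces Corollary \ref{corpuredim} via EGA IV 21.12.7), not the other way around. Vasiu's strategy -- which the paper follows -- never ``upgrades'' a codimension bound to affineness. Instead it shows that after replacements of $(S,U_j)$ of the two types listed at the start of the proof of Theorem \ref{thm1'} (normalization in a finite extension of $\kappa(S)$, or passing to an $\tilde S$ with $\tilde U\cong U_j$), one arrives at the situation $U_j=S$, so affineness is trivial. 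An argument aimed only at bounding the codimension of $S\setminus U_j$ does not prove the theorem.

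Second, the central technical device of the paper -- the admissibility theorem and the systematic use of truncations modulo $z^d$ -- is absent from your proposal, and without it the Quot/Grassmannian step cannot be made to work. Your plan is to realize $U_{j'}$ as the locus over which the generic slope sub-isocrystal $\mathcal{F}^{\rm gen}$ extends, parametrized by ``a Quot/Grassmannian-type parameter scheme'' and ``a principal homogeneous space under an affine group scheme.'' There is no such parameter scheme in a useful form here: the objects involved are infinite-dimensional, the slope filtration does not exist over the family (this is exactly the Oort--Zink problem), and you have not explained why the extension locus would coincide with $U_{j'}$ -- that coincidence is essentially the theorem itself, so the argument risks circularity. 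What the paper actually does is construct, over DVR localizations of $U_j$, approximate morphisms $\psi_{d-c}\colon\mathcal{O}_S[[z]]/(z^{d-c})\to\mathcal{O}_S[[z]]^n/(z^{d-c})$ via Proposition \ref{corslopefilt} (slope filtration up to controlled isogeny plus the Hodge--Newton filtration of Lemma \ref{prop3}), spread them out to $U_j$ via Matsumura 11.5, extend to $S$ by Vasiu's Lemma 6.1.5, and then invoke the Admissibility Theorem and Corollary \ref{coradmissible} to promote the truncated data to an honest nonzero morphism of local shtukas at every geometric point of $S$. That last step -- converting a congruence modulo $z^{d-c-2\tilde c-c'}$ into an actual homomorphism whose existence forces slope $0$ in the Newton polygon at every point -- is the crux, and nothing in your proposal plays that role.

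Two smaller points: the paper reduces further to the case where $j$ is the \emph{first} break point and equals $(1,0)$, using exterior powers and a central twist, which simplifies the Hodge--Newton step considerably; and the reduction to $GL_n$ uses a finite family of representations distinguishing Newton polygons (one per simple factor of $G_{\ad}$) rather than a single faithful representation -- with a single faithful $\rho$ one still needs to check that $\pr_{(j)}$ for $G$ matches $\pr_{(j')}$ for $GL(V)$ on Newton points, which you assert but do not verify.
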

For a review of Newton points, their sets of break points $J(\nu)$ and the projections $\pr_{(j)}$ for general split reductive groups compare Section \ref{secnot}.

In \cite{Grothendieck} Grothendieck conjectured a converse to the Grothendieck specialization theorem: Let $k$ be a field of characteristic $p$. Let $\nu_1\preceq\nu_2$ be two Newton polygons of $p$-divisible groups and let $X_1$ be a $p$-divisible group over $k$ with Newton polygon $\nu_1$. Then there also exists a $p$-divisible group over the discrete valuation ring $k[[t]]$ such that the special fiber is $X_1$ and the Newton polygon of the generic fiber is $\nu_2$. For $p$-divisible groups with and without polarization this conjecture is shown by Oort in \cite{Oort1}. Our second main result yields in particular a generalization of this result to our context. 

For a coweight $\mu\in X_*(T)$ we denote by $z^{\mu}\in T(\mathbb{F}_q\dl z\dr )$ the image of $z$ under $\mu$ and by $\overline{\mu}$ the image of $\mu$ in $\pi_1(G)$. On the set of dominant coweights $X_*(T)_{\dom}$ we consider the partial ordering induced by the Bruhat order, i.e.~$\mu\preceq\mu'$ if and only if $\mu'-\mu$ is a non-negative integral linear combination of positive coroots.

\begin{thm}\label{thm2}
Let $\mu_1\preceq\mu_2\in X_*(T)$ be dominant coweights. Let $S_{\mu_1,\mu_2}=\bigcup_{\mu_1\preceq\mu'\preceq \mu_2} Kz^{\mu'}K$. Let $[b]$ be a $\sigma$-conjugacy class with $\kappa_G(b)=\overline{\mu}_1=\overline{\mu}_2$ as elements of $\pi_1(G)$ and with $\nu_b\preceq\mu_2$. Then the Newton stratum $\mathcal{N}_b=[b]\cap S_{\mu_1,\mu_2}$ is non-empty and pure of codimension $\langle \rho,\mu_2-\nu_b\rangle+\frac{1}{2}\defect(b)$ in $S_{\mu_1,\mu_2}$. The closure of $\mathcal{N}_b$ is the union of all $\mathcal{N}_{b'}$ for $[b']$ with $\kappa_G(b')=\overline{\mu_1}$ and $\nu_{b'}\preceq\nu_b$.
\end{thm}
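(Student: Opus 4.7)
The plan is to establish the theorem in three stages---non-emptiness, purity together with the codimension formula, and the closure description---with Theorem~\ref{thm1} applied break-point-by-break-point as the key new input.

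Non-emptiness follows from the converse to Mazur's inequality in its equal-characteristic form (Rapoport--Richartz, Gashi): under the hypotheses $\kappa_G(b)=\overline{\mu}_2$ and $\nu_b\preceq\mu_2$, there exists $g\in[b]\cap Kz^{\mu_2}K$, and since $Kz^{\mu_2}K\subseteq S_{\mu_1,\mu_2}$ this gives a point of $\mathcal{N}_b$.

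For purity, note first that $S_{\mu_1,\mu_2}$ is an open subscheme of the affine Schubert variety $\overline{Kz^{\mu_2}K}$, hence irreducible and Cohen--Macaulay. Let $Z$ be an irreducible component of the Zariski closure $\overline{\mathcal{N}_b}\subseteq S_{\mu_1,\mu_2}$; since $\mathcal{N}_b$ is open in its closure, the generic point of $Z$ has Newton polygon $\nu_b$. Apply Theorem~\ref{thm1} to the integral scheme $Z$ with the tautological section: for each break point $j\in J(\nu_b)$, the open subscheme $U_j^Z\subseteq Z$ is affine over $Z$, and one has $\mathcal{N}_b\cap Z=\bigcap_{j} U_j^Z$, because on $Z$ the specialization bound $\nu_b(x)\preceq\nu_b$ combined with agreement at every break point of $\nu_b$ forces $\nu_b(x)=\nu_b$. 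Standard purity for affine open subschemes of Cohen--Macaulay schemes then implies that each $Z\setminus U_j^Z$ is pure of codimension one, whence purity of $\mathcal{N}_b$ in $S_{\mu_1,\mu_2}$. The exact codimension $\langle\rho,\mu_2-\nu_b\rangle+\frac{1}{2}\defect(b)$ should be extracted by identifying $\mathcal{N}_b\cap Kz^{\mu'}K$ as the image (modulo the twisted centralizer $J_b$) of the affine Deligne--Lusztig variety $X_{\mu'}(b)$, invoking the dimension formula $\dim X_{\mu'}(b)=\langle\rho,\mu'-\nu_b\rangle-\frac{1}{2}\defect(b)$, and noting that the maximum over $\mu'\in[\mu_1,\mu_2]$ is attained at $\mu'=\mu_2$.

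For the closure description, Grothendieck specialization combined with the local constancy of $\kappa_G$ yields $\overline{\mathcal{N}_b}\subseteq\bigcup_{\nu_{b'}\preceq\nu_b}\mathcal{N}_{b'}$. The reverse inclusion---the loop-group analogue of Grothendieck's conjecture on $p$-divisible groups---is where the main obstacle lies. The plan is to proceed by induction on the length of a chain $\nu_{b'}\prec\nu''\prec\cdots\prec\nu_b$ in the dominance order, reducing to the one-step case in which $\nu_{b'}$ and $\nu_b$ differ at a single break point. In that case, Theorem~\ref{thm1} together with the codimension-one purity established above reduces matters to producing, for an arbitrary point $x_0\in\mathcal{N}_{b'}$, an explicit one-parameter deformation inside $S_{\mu_1,\mu_2}$ with special fibre $x_0$ and generic Newton polygon $\nu_b$. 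Carrying out this deformation construction in the general loop-group setting, adapting Oort's argument for $p$-divisible groups, is the most involved step of the proof.
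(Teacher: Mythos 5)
Your proposal correctly identifies the three tasks and the role of Theorem~\ref{thm1}, but the route you propose for the closure description --- which is the real content of the theorem --- is exactly the one the paper argues cannot be carried out and deliberately replaces. You defer the key step to ``producing, for an arbitrary point $x_0\in\mathcal{N}_{b'}$, an explicit one-parameter deformation inside $S_{\mu_1,\mu_2}$ with special fibre $x_0$ and generic Newton polygon $\nu_b$ \ldots adapting Oort's argument for $p$-divisible groups.'' The introduction explains that this adaptation is obstructed for general $G$ (and already for non-minuscule $\mu$ in $\GL_n$): there is no analogue of the $a$-invariant, and Oort's explicit matrix manipulations for $\GL_n$ and $\GSp_{2n}$ do not carry over. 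So what you call the ``most involved step'' is not a loose end but the missing ingredient, and the strategy you sketch is not known to close.

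The paper avoids constructing any individual deformation. It proves a local statement, Theorem~\ref{thm2'}, about Newton strata in the complete local ring $R$ of $S_{\mu_1,\mu_2}$ at a point $b$, and deduces Theorem~\ref{thm2} from it, deriving Corollary~\ref{cormazur} along the way rather than taking it as input. The heart of Theorem~\ref{thm2'} is a sandwich on codimensions measured by Chai's $\delta$-function. The lower bound $\codim\mathcal{N}_{\preceq\nu}\geq\delta(\nu,\mu_2)$ is proved by contradiction: a minimal counterexample $\nu$ would give a component $C$ of $\mathcal{N}_{\preceq\nu}$ whose boundary (nonempty, since $\mathcal{N}_{\preceq\nu}$ contains the closed point of $\Spec R$) has a component $C'$ of codimension $\codim C+1$ by purity, with generic Newton point strictly below $\nu$, contradicting minimality. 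The upper bound, together with density of $\mathcal{N}_{\nu}$ in $\mathcal{N}_{\preceq\nu}$, is a descending induction starting from $\nu=\mu_2$: for $\delta(\nu',\nu)=1$ one picks the break point $j$ at which $\pr_{(j)}$ changes (Remark~\ref{remnew}), identifies $\mathcal{N}_{\preceq\nu'}$ with the locus in $\mathcal{N}_{\preceq\nu}$ where $\pr_{(j)}$ drops using Lemma~\ref{lemchai}, and applies the codimension-one purity of Corollary~\ref{corpuredim}, which is where Theorem~\ref{thm1} enters. Non-emptiness, the codimension formula (via Kottwitz's identity $\delta(\nu,\mu)=\langle\rho,\mu-\nu\rangle+\tfrac12\defect(b)$), and the closure statement all drop out of this single local induction, with no explicit one-parameter family ever exhibited.

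Two smaller points. Theorem~\ref{thm1} applies to integral locally noetherian $S$, whereas $S_{\mu_1,\mu_2}$ and your component $Z$ are infinite-dimensional; one must first reduce to the finite-dimensional quotient $S_{\mu_1,\mu_2}/K_c$ via the Admissibility Theorem (Remark~\ref{remadm}) or pass to the noetherian complete local ring, as the paper does. And your attribution for the converse of Mazur's inequality is off: Rapoport--Richartz proved Mazur's inequality and the specialization theorem, not the converse; the converse is due to Kottwitz--Rapoport, Lucarelli, and Gashi, in the Witt-vector setting (so one also needs the GHKR2 comparison to transfer it), and the paper in any case reproves Corollary~\ref{cormazur} from Theorem~\ref{thm2'}, using the dimension bounds for affine Deligne--Lusztig varieties, rather than citing these earlier results.
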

Here $\rho$ is the half-sum of the positive roots of $G$ and the defect $\defect(b)$ is defined as $\rk ~G - \rk_{\mathbb{F}_q\dl z\dr}J_b$ where $J_b$ is the reductive group over $\mathbb{F}_q\dl z\dr$ with $J_b(k\dl z\dr)=\{g\in LG(\overline k)\mid gb=b\sigma(g)\}$ for every field $k$ containing $\mathbb{F}_q$ and with algebraic closure $\overline{k}$.
Note that in order to formulate the theorem, one needs to define the notions of codimension and of the closure of the infinite-dimensional schemes $\mathcal{N}_b$. The definitions are based on an admissibility result, cf. Remark \ref{remadm}.

It is unclear whether the approach used by Oort can be generalized to also prove Theorem \ref{thm2}. The idea of his proof is to construct first a $p$-divisible group $X_1'$ over $k[[t]]$ with special fiber $X_1$ and such that the $a$-invariant of the generic fiber is equal to 1. This first step uses as one important ingredient the purity theorem of de Jong and Oort. In a second step one studies in detail the universal deformation of the $p$-divisible group with $a$-invariant 1, and can in this case give a formula for the Newton polygon of deformations of the group. This then leads to a deformation of the $p$-divisible group with $a=1$ and Newton polygon $\nu_1$ over $k\dl t\dr $ to a $p$-divisible group with Newton polygon $\nu_2$. When trying to generalize this approach, a first non-trivial problem arises from the fact that for general groups $G$ instead of $\GL_n$ or $\GSp_{2n}$, and in particular for non-minuscule $\mu$ (even for the group $\GL_n$), there is up to now no suitable analog of the $a$-invariant, one of the key tools in Oort's proof. A second problem is that the proof uses the description of the two groups $\GL_n$ and $\GSp_{2n}$ by matrices for explicit calculations which do not generalize easily to general reductive groups. In the proof we propose here we thus proceed in a different way, and use various results on the Newton stratification on loop groups such as Theorem \ref{thm1} and the dimension formula for affine Deligne-Lusztig varieties (\cite{GHKR} and \cite{dimdlv}) together with Chai's results on lengths of chains of Newton points, \cite{Chai}. 

An immediate consequence of Theorem \ref{thm2} for $\mu_1=\mu_2$ is the following Corollary. In fact, in Section \ref{secgrothconj} we first prove the Corollary and then use it in the proof of Theorem \ref{thm2}. 

\begin{cor}\label{cormazur}
Let $b\in LG(k)$ for an algebraically closed field $k$. Let $\nu\in X_*(T)_{\mathbb{Q},\dom}$ be its Newton point and let $\kappa_G(b)\in \pi_1(G)$ be its Kottwitz point. Let $\mu\in X_*(T)_{\dom}$ with $\nu\preceq\mu$ and such that $\kappa_G(b)=\overline{\mu}$ in $\pi_1(G)$. Then $[b]\cap Kz^{\mu}K\neq\emptyset$.
\end{cor}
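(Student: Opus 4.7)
\medskip

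\noindent\emph{Proof plan for Corollary \ref{cormazur}.} The plan is to prove non-emptiness by induction on Chai's chain distance $d(\nu,\mu)$ in the poset of Newton points $\nu'$ with $\nu'\preceq\mu$ and $\overline{\nu'}=\overline{\mu}$ in $\pi_1(G)$, showing $\mathcal{N}_{\nu'}\cap Kz^{\mu}K\neq\emptyset$ throughout. The conclusion is the case $\nu'=\nu$.

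\emph{Base case.} When $d(\nu,\mu)=0$, i.e.\ $\nu=\mu$, the element $z^{\mu}\in T(\mathbb{F}_q\dl z\dr)$ lies in $Kz^{\mu}K$ and, since $G$ is split so that $\sigma$ acts trivially on $X_*(T)$, has Newton point $\mu$. Hence $z^{\mu}\in\mathcal{N}_{\mu}\cap Kz^{\mu}K$. By (the equal-characteristic analogue of) Mazur's inequality, every Newton point attained on $Kz^{\mu}K$ is $\preceq\mu$, and the Kottwitz invariant equals $\overline{\mu}$ throughout.

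\emph{Inductive step.} Assume $d(\nu,\mu)\geq 1$ and choose an immediate successor $\nu'$ of $\nu$ in the poset, so $d(\nu',\mu)=d(\nu,\mu)-1$. By induction, $\mathcal{N}_{\nu'}\cap Kz^{\mu}K\neq\emptyset$; let $Z$ be its closure inside $\overline{Kz^{\mu}K}$ in the admissible sense of Remark \ref{remadm}. Combining the dimension formula of \cite{GHKR} and \cite{dimdlv} with Chai's chain-length formula \cite{Chai}, $Z$ is projective of positive dimension, and $\dim\mathcal{N}_{\nu'}\cap Kz^{\mu}K-\dim\mathcal{N}_{\nu}\cap Kz^{\mu}K=1$ (understood formally if $\mathcal{N}_{\nu}\cap Kz^{\mu}K$ is still to be shown non-empty). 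Let $j$ be the unique break point at which $\nu$ and $\nu'$ differ; by the Chai-theoretic description of elementary moves, $\nu$ is the unique immediate predecessor of $\nu'$ obtained by a drop at $j$. Applying Theorem \ref{thm1} to the universal element over (a noetherian approximation of) $Z$, the open $U_j\subset Z$ where $\pr_{(j)}(\nu_b(x))=\pr_{(j)}(\nu')$ is affine over $Z$. Since $Z$ is projective of positive dimension, $U_j\subsetneq Z$, and $Z\setminus U_j$ is a non-empty closed subset of pure codimension $1$ in $Z$. Every $x\in Z\setminus U_j$ satisfies $\nu_b(x)\preceq\nu'$ (Grothendieck specialization) with strictly smaller $j$-th projection, hence $\nu_b(x)\preceq\nu$. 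Since $\dim(Z\setminus U_j)=\dim Z-1$ matches $\dim\mathcal{N}_{\nu}\cap Kz^{\mu}K$ while every deeper stratum $\mathcal{N}_{\tilde\nu}$ with $\tilde\nu\precneq\nu$ has strictly smaller dimension (by the dim formula plus Chai), every irreducible component of $Z\setminus U_j$ is generically contained in $\mathcal{N}_{\nu}$. Thus $\mathcal{N}_{\nu}\cap Kz^{\mu}K\neq\emptyset$, completing the induction.

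\emph{Main obstacle.} The delicate step is the dimension match in the inductive step: purity (Theorem \ref{thm1}) alone only forces some Newton drop in a codim-$1$ piece, and one must rule out that this piece lands in a strictly deeper stratum. This is resolved by simultaneously invoking (i) the codim-$1$ affineness of $U_j$ from Theorem \ref{thm1}, (ii) Chai's uniqueness of the elementary move at break point $j$, and (iii) the strict monotonicity of $\dim\mathcal{N}_{\tilde\nu}\cap Kz^{\mu}K$ along Chai chains coming from the dimension formula in \cite{GHKR}, \cite{dimdlv}. A secondary technical point is the proper interpretation of projectivity and codimension for the infinite-dimensional $Z$, which one handles via the admissibility framework referenced in Remark \ref{remadm}.
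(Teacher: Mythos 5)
Your strategy runs the induction in the opposite direction from the paper: you start from $z^\mu\in\mathcal N_\mu\cap Kz^\mu K$ and try to descend through the Newton poset one elementary step at a time, whereas the paper first produces an explicit element of the deepest (basic) $\sigma$-conjugacy class in $\overline{Kz^\mu K}$ by representing the length-zero element $w_1\in\Omega$ with $\kappa_G(w_1)=\kappa_G(b)$, shows $X_\mu(b)\neq\emptyset$ for basic $b$ by comparing the lower bound on $\dim X_{\preceq\mu}(b)$ from \cite{HV1} with the upper bound from \cite{GHKR}, \cite{dimdlv}, and then for general $b$ climbs upward from the basic point $b_0$ via Theorem \ref{thm2'}, i.e.\ by analysing the Newton stratification on the complete local ring $\Spec R$ of $S_{\mu,\mu}$ at $b_0$.

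The downward induction you propose has a genuine gap at the step $U_j\subsetneq Z$. You justify this by asserting that $Z$ (the closure of $\mathcal N_{\nu'}\cap Kz^\mu K$ in the admissible sense) is ``projective of positive dimension''. This is not the case: after passing to finite type via the admissibility of Remark \ref{remadm}, $\overline{Kz^\mu K}/K_d$ is a bundle over the projective variety $\Gr_{\preceq\mu}$ whose fibre is the affine scheme $K/K_d$, so closed subschemes of it are not projective in general. Consequently affineness of $U_j$ (Theorem \ref{thm1}) gives no a priori obstruction to $U_j=Z$, and establishing $Z\setminus U_j\neq\emptyset$ is essentially equivalent to the non-emptiness statement you are trying to prove, making the argument circular. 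The paper circumvents exactly this difficulty by working in the complete local ring $\Spec R$ at a point $b_0$ whose Newton point is already $\preceq\nu'$: there the closed point of $\Spec R$ automatically lies in every closed stratum $\mathcal N_{\preceq\nu'}$, so the purity argument has a non-empty complement for free. That, in turn, is why the basic case must be handled first by a separate, explicit construction --- the ``seed'' $b_0$ has to exist before the descending induction over $\Spec R$ can be started.

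A secondary remark: your dimension comparison $\dim\mathcal N_{\tilde\nu}\cap Kz^\mu K<\dim Z-1$ for $\tilde\nu\precneq\nu$ invokes the affine Deligne--Lusztig dimension formula, but $\mathcal N_{\tilde\nu}\cap Kz^\mu K$ and $X_\mu(b')$ are not literally the same object; one must pass through the admissible quotient and a product-type relation (as the paper does via \cite{HV2}, Corollary 6.10, and Lemma \ref{lem17}) before the formula applies. This can be repaired, but the projectivity claim cannot, so the inductive step as written does not go through.
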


Corollary \ref{cormazur} is also referred to as the converse of Mazur's inequality for split groups. It gives a necessary and sufficient condition for non-emptiness of affine Deligne-Lusztig varieties in the affine Grassmannian. This Corollary has been previously shown using different methods by Kottwitz and Rapoport (\cite{KR}, for $\GL_n$ and $\GSp_{2n}$), Lucarelli (\cite{Lucarelli}, for classical groups) and then Gashi \cite{Gashi} in the general case. More precisely, these authors consider the Witt vector case instead of the function field case as we do. By \cite{GHKR2}, Proposition 13.3.1, the assertions of Corollary \ref{cormazur} for these two cases are equivalent.\\

\noindent{\it Acknowledgement.} I am grateful to Y.~Yang for making a preliminary version of her article on the purity theorem available to me. I thank the referee for pointing out some inaccuracies and for providing suggestions for improvements. This work was partially supported by the SFB/TR45 ``Periods, Moduli Spaces and Arithmetic of Algebraic Varieties'' of the DFG.

\section{Elements of the loop group and their Newton points}\label{secnot}

In this section we review the necessary results on the set of $\sigma$-conjugacy classes of elements in the loop group of a split reductive group, the main references being \cite{Kottwitz1} and \cite{Chai}.

As above let $G$ be a split reductive group over $\mathbb{F}_q$, and let $B\supseteq T$ be a Borel subgroup and a split maximal torus of $G$. Then an element $\eta$ of $X_*(T)$ or $X_*(T)_{\mathbb{Q}}=X_*(T)\otimes \mathbb{Q}$ is dominant if $\langle \alpha,\eta\rangle\geq 0$ for all positive roots $\alpha$ of $T$. We denote the subset of dominant elements by $X_*(T)_{\dom}$ resp.~$X_*(T)_{\mathbb{Q},\dom}$. Let $\widetilde{W}$ be the extended affine Weyl group of $G$. We have $\widetilde W\cong W\ltimes X_*(T)$ where $W$ denotes the finite Weyl group of $G$. We choose representatives of the elements of $\widetilde{W}$ in $G(\mathbb{F}_q\dl z\dr )$ and in this way view $\widetilde{W}$ as a subset of $G(\mathbb{F}_q\dl z\dr )$. 

The extended affine Weyl group $\widetilde{W}$ also has a decomposition $\widetilde W\cong \Omega\ltimes W_{\rm aff}$. Here $\Omega$ is the subset of elements of $\widetilde{W}$ which fix the chosen Iwahori subgroup $I=\{g\in K\mid g\in B\pmod{z}\}$ of $LG$. The second factor $W_{\rm aff}$ is the affine Weyl group of $G$. In terms of the decomposition $\widetilde{W}\cong W\ltimes X_*(T)$ it has the following description. Let $G_{sc}$ be the simply connected cover of $G$ and let $T_{sc}$ be the inverse image of $T$ in $G_{sc}$. Then $W_{\rm aff}\cong W\ltimes X_*(T_{sc})$ and $\Omega\cong X_*(T)/X_*(T_{sc})$. The affine Weyl group of $G$ is an infinite Coxeter group. It is generated by the simple reflections $s_i$ associated with the simple roots of $T$ in $G$ together with the simple affine root. We extend the corresponding length function $\ell$ to $\widetilde W$ by setting $\ell(x)=0$ for all $x\in \Omega$. The choice of $I$ also induces an ordering on $\widetilde{W}$, the Bruhat ordering. It is defined as follows. Let $x,y\in\widetilde W$ and let $x=\omega_xx'$ and $y=\omega_yy'$ be their decompositions into elements of $\Omega$ and $W_{\rm aff}$. Then $x\leq y$ if and only if $\omega_x=\omega_y$ and if there are reduced expressions $x'=s_{i_1}\dotsm s_{i_n}$ and $y'=s_{j_1}\dotsm s_{j_m}$ for $x'$ and $y'$ such that $(s_{i_1},\dotsc,s_{i_n})$ is a subsequence of $(s_{j_1},\dotsc,s_{j_m})$. On $X_*(T)_{\dom}$ it induces the ordering $\preceq$ described in the introduction. 

Let $k$ be a field containing $\mathbb{F}_q$. By the Bruhat decomposition we have $$LG(k)=\coprod_{\mu\in X_*(T)_{\dom}}K(k)z^{\mu}K(k).$$ If $b\in LG(k)$, the associated element $\mu_b$ with $b\in K(k)z^{\mu_b}K(k)$ is called the Hodge point of $b$. Note that it is invariant under extensions of $k$. Let $S$ be a reduced scheme. Then we associate with an element $b\in LG(S)$ the function $\mu_b$ mapping each closed point $x$ of $S$ to the Hodge point $\mu_b(x)=\mu_{b_x}$ of $b_x$. The element $b\in LG(S)$ is called bounded by some $\mu\in X_*(T)_{\dom}$ if $\mu_b(x)\preceq\mu$ for all $x\in S$. This definition is closely related to the notion of boundedness of the associated local $G$-shtukas as reviewed in Section \ref{secshtuka}. Note that in Section \ref{subsecadm} we use a slightly more general notion of boundedness for subsets of $LG$. The two definitions coincide for connected subsets resp. connected schemes $S$. 

One of Kottwitz's invariants of $\sigma$-conjugacy classes, the morphism $\kappa_G:G(L)\rightarrow \pi_1(G)$, can easily be described in terms of the Hodge point. Indeed, recall that $\pi_1(G)$ is the quotient of $X_*(T)$ by the coroot lattice. Then $\kappa_G(b)=[\mu_b]$ is the image of $\mu_b$ under the projection $X_*(T)\rightarrow \pi_1(G)$. The morphism $\kappa_G$ also induces a surjection $\kappa_G:\widetilde{W}\cong W\ltimes X_*(T)\rightarrow \pi_1(G)$. The subgroup $W_{\rm aff}$ of $\widetilde{W}$ is in the kernel of $\kappa_G$. On the subgroup $\Omega$ of the extended affine Weyl group, $\kappa_G$ induces the canonical isomorphism $\Omega\cong X_*(T)/X_*(T_{sc})\rightarrow \pi_1(G)$.

We further associate with $b\in LG(S)$ the function $\nu_b$ mapping each $x\in S(k)$ for some field $k$ to the Newton point $\nu_b(x):=\nu_{b_{\overline x}}$ of $b_{\overline x}$ where $\overline x$ is the associated point in $S(\overline k)$ for some algebraic closure $\overline k$ of $k$. This invariant is well defined, i.e.~independent of the choice of $\overline k$. For the purpose of this article it is enough to consider connected $S$. In this case the Kottwitz point $\kappa_G(b)$ is constant on $S$ and is assumed to be equal to some fixed element (see \cite{HV1}, Proposition 3.4). Then the Newton point determines the $\sigma$-conjugacy class of $b_x$ for every geometric point $x$. For a given Newton point $\nu$ let $\mathcal{N}_{\nu}\subseteq S$ be the Newton stratum associated with $\nu$ (and $S$, $b$) and let $\mathcal{N}_{\preceq\nu}=\bigcup_{\nu'\preceq\nu}\mathcal{N}_{\nu'}$ be the closed Newton stratum associated with $\nu$.

The set $\mathcal{N}(G)$ of elements of $X_*(T)_{\mathbb{Q},\dom}$ actually occurring as Newton points has been described by Kottwitz, and then further studied by Chai \cite{Chai}. We review some of Chai's results. Let $\alpha_1,\dotsc, \alpha_l$ be the simple roots of $T$ in $B$. With $\nu\in X_*(T)_{\mathbb{Q},\dom}$ we associate the set $J(\nu)=\{i\mid \langle \alpha_i,\nu\rangle>0\}.$ We call $J(\nu)$ the set of break points of $\nu$. This denomination is justified by the usual interpretation of Newton points for $\GL_n$ as polygons: For $G=\GL_n$ we have $X_*(T)_{\mathbb{Q}}\cong\mathbb{Q}^n$. Choosing $B$ to be the subgroup of lower triangular matrices, an element $\nu=(\nu_1,\dots,\nu_n)$ is dominant if $\nu_1\leq\dotsm\leq\nu_n$. We associate with $\nu$ the polygon that is the graph of the piecewise linear continuous function $[0,n]\rightarrow\mathbb{R}$ mapping $0$ to $0$ and with slope $\nu_i$ on $[i-1,i]$. Furthermore we choose $\alpha_i=(\dotsc,0,-1,1,0,\dotsc)$ to be the difference of the $(i+1)$st and the $i$th standard basis vector. Then $J(\nu)$ is the set of $i$ with $\nu_i<\nu_{i+1}$, which is the same as the set of first coordinates of the break points of the associated polygon.

Let $\omega_i^{\vee}\in X_*(T/Z)$ be the fundamental coweights (one for each $\alpha_i$) where $Z$ is the center of $G$. Let $\pr_{(i)}:X_*(T)_{\mathbb{Q}}\rightarrow \mathbb{Q}\cdot \omega_i^{\vee}$ be the orthogonal projection (compare \cite{Chai}, 6). On $\mathbb{Q}\cdot\omega_i^{\vee}$ we define the ordering $\preceq$ to be induced by the ordering on the coefficients in $\mathbb{Q}$.

Let $\nu',\nu\in \mathcal{N}(G)$ be two Newton points having the same image in $\pi_1(G)_{\mathbb{Q}}$. Then by \cite{Chai}, Lemma 6.2, $\nu'\preceq\nu$ if and only if $\pr_{(i)}(\nu')\preceq\pr_{(i)}(\nu)$ for all $i$. We assume that this is the case. Let $\mu\in X_*(T)_{\dom}$ with $\nu\preceq\mu$. We define
$$\delta(\nu',\nu)=\sum_i (\lceil\langle\omega_i,\mu-\nu'\rangle\rceil-\lceil\langle\omega_i,\mu-\nu\rangle\rceil)$$
where the $\omega_i$ denote the images of the fundamental weights of the derived group $G_{der}$ in $X^*(T)$. It is easy to see that this is independent of the choice of $\mu$. 

\begin{remark}\label{remchai}
\begin{enumerate}
\item\label{remchai2} Let $\nu'\preceq\nu$. By \cite{Chai}, Theorem 7.4 there is a chain of Newton points $\nu'=\nu_0\preceq \dotsm\preceq\nu_n=\nu$ with $n=\delta(\nu',\nu)$, and $\delta(\nu_i,\nu_{i+1})=1$ for all $i$. Each complete chain of Newton points between $\nu'$ and $\nu$ has length $\delta(\nu',\nu)$.
\item Let $\mu\in X_*(T)_{\dom}$ with $\nu\preceq\mu$, thus $\delta(\nu,\mu)=\sum_i \lceil\langle\omega_i,\mu-\nu\rangle\rceil$. By \cite{Kottwitz2} we have $$\delta(\nu,\mu)=\langle \rho,\mu-\nu\rangle+\frac{1}{2}\defect(b)$$ if $b\in LG(k)$ (over any field $k$) has Newton point $\nu$ and $\kappa_G(b)=\overline\mu$ and where the defect $\defect(b)$ is defined as in the introduction.
\end{enumerate}
\end{remark}

\begin{lemma}\label{lemchai}
Let $\nu\in \mathcal{N}(G)$ and let $j\in J(\nu)$. 
\begin{enumerate}
\item\label{1} There is a unique maximal $\nu^{j}\in \mathcal{N}(G)$ with $\nu^{j}\preceq \nu$ and $\pr_{(j)}(\nu^{j})\neq \pr_{(j)}(\nu)$. 
\item\label{2} $\pr_{(i)}(\nu^{j})=\pr_{(i)}(\nu)$ for all $i\in J(\nu)\setminus \{j\}$.
\item\label{3} $\delta(\nu^{j},\nu)=1$.
\end{enumerate}
\end{lemma}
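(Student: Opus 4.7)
My plan is to construct $\nu^j$ explicitly and verify each property using Chai's framework. Set $\nu^j := \nu - c\,\alpha_j^\vee$, where $c > 0$ is the smallest positive rational number such that $\nu - c\alpha_j^\vee \in \mathcal{N}(G)$. The hypothesis $j \in J(\nu)$, i.e.~$\langle \alpha_j, \nu\rangle > 0$, guarantees that $\nu - \epsilon\alpha_j^\vee$ remains dominant for small $\epsilon > 0$, and the discreteness of the integrality conditions defining $\mathcal{N}(G)$ inside a fixed $\pi_1(G)$-class ensures that such a minimum exists.

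Property (2) is then immediate: for $i \neq j$ one has $\langle\alpha_j,\omega_i^\vee\rangle = 0$ by the defining relation of the fundamental coweights, and via the Weyl-invariant inner product on $X_*(T)_{\mathbb{Q}}$ (under which simple coroots map to positive multiples of simple roots in $X^*(T)_{\mathbb{Q}}$) this upgrades to $(\alpha_j^\vee,\omega_i^\vee) = 0$, so $\pr_{(i)}(\alpha_j^\vee) = 0$ and $\pr_{(i)}(\nu^j) = \pr_{(i)}(\nu)$. For property (3) I apply Remark \ref{remchai}(2) to any integral dominant $\mu\succeq\nu$: since $\nu - \nu^j = c\alpha_j^\vee$ and $\langle\omega_i,\alpha_j^\vee\rangle = \delta_{ij}$, only the $i = j$ summand contributes to $\delta(\nu^j,\nu)$. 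The value of this summand is exactly $1$; otherwise Remark \ref{remchai}(1) would yield $\tilde\nu \in \mathcal{N}(G)$ with $\nu^j\prec\tilde\nu\prec\nu$, and applying Chai's Lemma 6.2 to both $\nu^j\preceq\tilde\nu$ and $\tilde\nu\preceq\nu$ would force $\pr_{(i)}(\tilde\nu)=\pr_{(i)}(\nu)$ for every $i\neq j$, whence $\tilde\nu = \nu - c'\alpha_j^\vee$ with $0 < c' < c$, contradicting the minimality of $c$.

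The main obstacle is the maximality claim in (1). Given $\nu'\in\mathcal{N}(G)$ with $\nu'\preceq\nu$ and $\pr_{(j)}(\nu')\neq\pr_{(j)}(\nu)$, Chai's Lemma 6.2 reduces $\nu'\preceq\nu^j$ to showing $\pr_{(j)}(\nu')\preceq\pr_{(j)}(\nu^j)$, the case $i\neq j$ being automatic from $\nu'\preceq\nu$ and property (2). I plan to exploit the lattice structure on $\mathcal{N}(G)$ within a fixed $\pi_1(G)$-class (for $\GL_n$ this is the classical ``max of partial sums'' lattice on Newton polygons with fixed endpoints, and in general it follows from Chai's analysis in \cite{Chai}) to conclude that the down-set $S := \{\eta\in\mathcal{N}(G) : \eta\preceq\nu,\ \pr_{(j)}(\eta)\neq\pr_{(j)}(\nu)\}$ admits a maximum $\nu^j_{\max}$. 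Since $\nu^j\in S$, we have $\nu^j\preceq\nu^j_{\max}\preceq\nu$; applying Chai's Lemma 6.2 on both sides gives $\pr_{(i)}(\nu^j_{\max})=\pr_{(i)}(\nu)$ for all $i\neq j$, whence $\nu^j_{\max} = \nu - c''\alpha_j^\vee$ for some $c'' > 0$. The minimality of $c$ forces $c'' \geq c$, while $\nu^j_{\max}\succeq\nu^j$ forces $c''\leq c$, so $c'' = c$ and $\nu^j_{\max} = \nu^j$, establishing maximality. The delicate point is verifying the lattice property, in particular that the candidate join (formed by taking componentwise maxima of projections) preserves the integrality conditions cutting out $\mathcal{N}(G)$.
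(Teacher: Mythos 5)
Your construction $\nu^j := \nu - c\,\alpha_j^\vee$ does not work in general: the set $\{c>0 : \nu - c\alpha_j^\vee \in \mathcal{N}(G)\}$ can be empty, and even when nonempty it is not what the lemma is asking for. You only argue \emph{discreteness} (hence existence of a minimum once the set is known to be nonempty), but you never establish nonemptiness, and the hypothesis $j\in J(\nu)$ only yields that $\nu-\epsilon\alpha_j^\vee$ stays \emph{dominant} for small $\epsilon$, not that some point on this ray hits the integrality constraints cutting out $\mathcal{N}(G)$. A concrete counterexample: take $G=\GL_3$, $\nu=(0,1/2,1/2)$, $j=1$. Then $\nu-c\alpha_1^\vee=(c,\,1/2-c,\,1/2)$ is dominant only for $c\le 1/4$, and for $0<c<1/4$ the break at $x=1$ has non-integral height $c$, while at $c=1/4$ the break at $x=2$ has non-integral height $1/2$; so no $c>0$ gives an element of $\mathcal{N}(G)$. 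Yet the set $\{\eta\in\mathcal{N}(G):\eta\preceq\nu,\ \pr_{(1)}(\eta)\neq\pr_{(1)}(\nu)\}$ is nonempty: it contains $(1/3,1/3,1/3)$, which is in fact the $\nu^1$ of the lemma. Similarly for $G=\GL_4$, $\nu=(0,1/2,1/2,1)$, $j=1$: the correct $\nu^1$ is $(1/3,1/3,1/3,1)$, not of the form $\nu-c\alpha_1^\vee$.

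The conceptual misreading is that part (2) of the lemma only asserts $\pr_{(i)}(\nu^j)=\pr_{(i)}(\nu)$ for $i\in J(\nu)\setminus\{j\}$, i.e.\ at the \emph{other break points}, not for all $i\neq j$. An element of the form $\nu-c\alpha_j^\vee$ would have all projections away from $j$ unchanged, which is a much stronger and typically unsatisfiable requirement — the maximal $\nu^j$ generally moves the projections at non-break indices $i\notin J(\nu)$ as well (as in both examples above, where $\pr_{(2)}$ changes). Because your proofs of (2) and (3) both squeeze $\nu^j_{\max}$ between $\nu-c\alpha_j^\vee$ and $\nu$, they collapse once $\nu-c\alpha_j^\vee$ fails to exist. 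The paper's proof sidesteps all this by taking $\nu^j$ to be the supremum (in $X_*(T)_{\mathbb{Q},\dom}$) of the finite set $\{\eta\in\mathcal{N}(G):\eta\preceq\nu,\ \pr_{(j)}(\eta)\neq\pr_{(j)}(\nu)\}$, using Chai's Lemma 6.2 (ii) to get nonemptiness, the existence of an $\eta_j$ in this set already satisfying (2), and the fact that the supremum lies again in $\mathcal{N}(G)$; then (2) follows from the squeeze $\eta_j\preceq\nu^j\preceq\nu$ and (3) from Lemma 6.2 (i) plus the chain-length count of Remark \ref{remchai}(1). If you want to keep the flavour of an explicit construction, you would at minimum have to replace $\nu-c\alpha_j^\vee$ by a candidate that is allowed to differ from $\nu$ in the $\pr_{(i)}$ for $i\notin J(\nu)$, which essentially forces you back to Chai's construction.
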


\begin{proof}
By \cite{Chai}, proof of Lemma 6.2 (ii), the finite set $\{\eta\in\mathcal{N}(G)\mid \eta\preceq\nu, \pr_{(j)}(\eta)\neq\pr_{(j)}(\nu)\}$ is non-empty, and contains an element $\eta_j$ satisfying the equality in (\ref{2}). Let $\nu^j$ be the supremum of this finite set of Newton points, i.e.~the minimal element of $X_*(T)_{\mathbb{Q},\dom}$ with $\eta\preceq\nu^j$ for all $\eta$ as above, which Chai shows to exist and to be an element of $\mathcal{N}(G)$. It satisfies $\pr_{(j)}(\nu^j)=\max\{\pr_{(j)}(\eta)\}\neq\pr_{(j)}(\nu)$, hence (1). Furthermore, $\eta_j\preceq\nu^j\preceq\nu$ implies $\pr_{(i)}(\eta_j)\leq\pr_{(i)}(\nu^j)\leq\pr_{(i)}(\nu)$. Hence (2) follows for $\nu^j$. For (\ref{3}) we use that if $\nu^j\preceq\nu'\preceq\nu$ with $\nu'\neq\nu$ then by the second equality of \cite{Chai}, Lemma 6.2 (i) there is a $j'\in J(\nu)$ with $\pr_{(j')}(\nu')\neq\pr_{(j')}(\nu)$. Thus the claim follows from \eqref{1}, \eqref{2}, and Remark \ref{remchai} (1).
\end{proof}
\begin{remark}\label{remnew}
Now assume that $\nu'\preceq\nu$ with $\delta(\nu',\nu)=1$. Then there is a $j\in J(\nu)$ with $\pr_{(j)}(\nu')\neq\pr_{(j)}(\nu)$ (\cite{Chai}, Lemma 6.2 (i)). As the supremum of $\nu'$ and $\nu^j$ satisfies the defining conditions of $\nu^j$, and as $\delta(\nu',\nu)=1$ implies that there is no element of $\mathcal{N}(G)$ strictly lying between $\nu'$ and $\nu$ we see that $\nu'=\nu^j$.
\end{remark}

\section{Review of local $G$-shtukas}\label{secshtuka}

We recall some definitions and results related to local $G$-shtukas and their deformations, the main reference being \cite{HV1}.

Let $S$ be an $\mathbb{F}_q$-scheme. A local $G$-shtuka $\underline{\mathcal{G}}$ over $S$ is a pair $\underline{\mathcal{G}}=(\mathcal{G},\phi)$ where $\mathcal{G}$ is a $K$-torsor on $S$ for the \'etale topology and where $\phi:\sigma^*(\mathcal{LG})\rightarrow \mathcal{LG}$ is an isomorphism of $LG$-torsors. Here $\mathcal{LG}$ is the $LG$-torsor associated with $\mathcal{G}$.    

We assume for the moment that the torsor $\mathcal{G}$ is trivial and choose a trivialisation $\mathcal{G}\cong K_S$. Note that this is in particular the case if $S$ is the spectrum of an algebraically closed field or more generally of a strictly henselian ring. The trivialization induces an identification of the isomorphism $\phi$ with $b\sigma$ for some element $b$ of $LG(S)$. Changing the trivialization replaces $b$ by a $\sigma$-conjugate under $K(S)$. Let now $\underline{\mathcal{G}}$ be a not necessarily trivializable local $G$-shtuka over $S$. Then we can associate with each point of $S$ a Newton point (or rather a $\sigma$-conjugacy class) and a Hodge point of $\underline{\mathcal{G}}$ which are defined as the corresponding invariants of a trivialisation of the local $G$-shtuka over an algebraic closure of the field of definition of the given point. The two invariants are independent of the choice of the algebraic closure and of the chosen trivialization of $\underline{\mathcal{G}}$. We call the functions mapping the closed points of $S$ to the corresponding invariants of $\underline{\mathcal{G}}$ in that point the Newton point resp. the Hodge point of $\underline{\mathcal{G}}$. In this way we also obtain the notion of boundedness of $\underline{\mathcal{G}}$ by some $\mu\in X_*(T)_{\dom}$ as long as $S$ is reduced. To define bounded local $G$-shtukas over a non-reduced scheme one has to be more careful (compare \cite{HV1}, Example 3.12). Let $\psi:G\rightarrow \GL(V_{\psi})$ be a representation of $G$. Then let $\underline{\mathcal G}_{\psi}$ be the fpqc-sheaf $\mathcal{G}_{\psi}$ of $\mathcal{O}_S[[ z]]$-modules on $S$ associated with the presheaf $$Y\mapsto\left(\mathcal{G}(Y)\times(V_{\psi}\otimes_{\mathbb{F}_q}\mathcal{O}_S[[ z]](Y))\right)/K(Y)$$ (compare \cite{HV1}, Section 3) together with the morphism $\phi_{\psi}:\sigma^*(\mathcal{G}_{\psi}\otimes_{\mathcal{O}_S[[z]]}\mathcal{O}_S\dl z\dr)\rightarrow\mathcal{G}_{\psi}\otimes_{\mathcal{O}_S[[z]]}\mathcal{O}_S\dl z\dr$ induced by $\phi$. For a dominant weight $\lambda$ of $G$ let $V(\lambda)=(\Ind_{\overline{B}}^G((-\lambda)_{\dom}))^{\vee}$ be the associated Weyl module. Here $\overline{B}$ is the Borel subgroup of $G$ opposite of $B$ and $(-\lambda)_\dom$ denotes the dominant element in the Weyl group orbit of $-\lambda$. Then $\underline{\mathcal{G}}$ is bounded by $\mu\in X_*(T)_{\dom}$ if for every dominant weight $\lambda$
\begin{align*} 
\phi_\lambda(\mathcal {G}_{\lambda})&\subseteq z^{-\langle(-\lambda)_\dom,\mu\rangle}\mathcal{G}_{\lambda}\subseteq\mathcal{G}_{\lambda}\otimes_{\mathcal{O}_S[[z]]}\mathcal{O}_S\dl z\dr\quad\text{and}\\
\overline\mu&=\overline{\mu_{\underline{\mathcal{G}}}(x)} \text{ in }\pi_1(G)\text{ for all }x\in S.
\end{align*}

Let $\underline{\mathcal{G}}_0\cong(K_k,b\sigma)$ be a local $G$-shtuka over an algebraically closed field $k$ which is bounded by $\mu\in X_*(T)_{\dom}$. The formal deformation functor of $\underline{\mathcal G_0}$ for deformations bounded by $\mu$ is the functor assigning to each Artinian local $k$-algebra $A$ with residue field $k$ the set of isomorphism classes of pairs $(\underline{\mathcal G},\beta)$. Here $\underline{\mathcal G}$ is a local $G$-shtuka over $\Spec A$ bounded by $\mu$ and $\beta:\underline{\mathcal G_0}\rightarrow\underline{\mathcal G}\otimes_A k$ is an isomorphism of local $G$-shtukas. Two pairs $(\underline{\mathcal G},\beta)$ and $(\underline{\mathcal G}',\beta')$ are isomorphic if there exists an isomorphism $\eta:\underline{\mathcal G}\rightarrow\underline{\mathcal G}'$ with $\beta'=(\eta\otimes_A k)\circ\beta$. 

Let $\Gr=LG/K$ be the affine Grassmannian of $G$. By the Bruhat decomposition we have $\Gr=\coprod_{\mu\in X_*(T)_{\dom}}Kz^{\mu}K/K$. We associate with $\mu\in X_*(T)_{\dom}$ the closed subscheme $\Gr_{\preceq\mu}=\coprod_{\mu'\preceq\mu}Kz^{\mu'}K/K$ of $\Gr$. By \cite{HV1}, Theorem 5.6, the formal deformation functor of $\underline{\mathcal G_0}$ for deformations bounded by $\mu$ is pro-represented by the following variant: Let $\Gr'=K\backslash LG$ and $\Gr'_{\preceq\mu}=\coprod_{\mu'\preceq(-\mu)_{\dom}}K\backslash Kz^{\mu'}K$. Then the formal deformation functor is pro-represented by the completion $R_{\preceq\mu}$ of $\Gr'_{\preceq\mu}$ in $b^{-1}$. Especially, the reduced quotient of the universal deformation ring is a complete normal noetherian integral domain of dimension $\langle 2\rho, \mu\rangle$ and Cohen-Macaulay (\cite{HV1}, Prop. 5.9).

By \cite{HV1}, Proposition 3.16, we have a bijection between local $G$-shtukas bounded by $\mu$ over $\Spec R_{\preceq\mu}$ and over $\Spf~R_{\preceq\mu}$. Especially, the universal local $G$-shtuka $\underline{\mathcal{G}}_{\univ}$ over $\Spf~R_{\preceq\mu}$ induces a local $G$-shtuka and hence a Newton stratification on $\Spec R_{\preceq\mu}$.

We now consider the case $G=\GL_n$. Let $\mathcal{G}$ be a $K$-torsor on an $\mathbb{F}_q$-scheme $S$. By \cite{HV1}, Proposition 2.3, $\mathcal{G}$ is Zariski-locally on $S$ isomorphic to the trivial $K$-torsor. Furthermore we have a bijection between $K$-torsors $\mathcal{G}$ on $S$ and sheaves of $\mathcal{O}_S[[ z]]$-modules on $S$ which locally for the Zariski-topology on $S$ are isomorphic to $\mathcal{O}_S[[ z]]^{n}$. It is given by mapping $\mathcal{G}$ to $\mathcal{G}_{\psi}$ where $\psi$ is the standard representation of $\GL_n$ (\cite{HV1}, Lemma 4.2). If $\underline{\mathcal{G}}=(\mathcal{G},\phi)$ is a local $\GL_n$-shtuka on $S$, the morphism $\phi$ induces a corresponding morphism $\phi:\sigma^*(\mathcal{G}_{\psi}\otimes_{\mathcal{O}_S[[z]]}\mathcal{O}_S\dl z\dr)\rightarrow \mathcal{G}_{\psi}\otimes_{\mathcal{O}_S[[z]]}\mathcal{O}_S\dl z\dr$. The pair $(\mathcal{G}_{\psi},\phi)$ is called the local shtuka of rank $n$ associated with $\underline{\mathcal{G}}$. As the functor assigning to each local $\GL_n$-shtuka its corresponding local shtuka of rank $n$ is a bijection, we sometimes identify $\underline{\mathcal{G}}$ and $(\mathcal{G}_{\psi},\phi)$.

\section{Purity}\label{secpurity}
The main goal of this section is to prove Theorem \ref{thm1}. We also discuss some implications of the theorem.

\subsection{Reduction of the purity theorem to a special case}

For the proof of Theorem \ref{thm1} we generalize the proof of \cite{Vasiu}, Theorem 6.1. The main strategy is the same as in Vasiu's proof, however, the details are changed considerably due to the weaker assumptions we make. We first reduce Theorem \ref{thm1} to the following special case.
\begin{thm}\label{thm1'}
Assume in the situation of Theorem \ref{thm1} that $G=\GL_n$, and that $S$ is a noetherian integral, normal, excellent, affine scheme. Using the notation for Newton polygons (for $\GL_n$) explained in Section \ref{secnot} let $j$ be the first break point of the generic Newton polygon of $b$ and assume that $j=(1,0)$, i.e.~that all Newton slopes are non-negative and slope $0$ occurs precisely once. Let $U_j$ be the open subscheme of $S$ defined by the condition that a point $x$ of $S$ lies in $U_j$ if and only if $j$ lies on the Newton polygon of $b_x$. Then $U_j$ is an affine $S$-scheme.
\end{thm}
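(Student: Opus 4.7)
My plan is to adapt Vasiu's proof of Theorem 6.1 in \cite{Vasiu} to the function field setting. Translate $b \in L\GL_n(S)$ into its associated local shtuka $(\mathcal{M},\phi)$ of rank $n$, using the dictionary of Section \ref{secshtuka}. Here $\mathcal{M}$ is a locally free $\mathcal{O}_S[[z]]$-module of rank $n$ and $\phi$ is the Frobenius isomorphism on $\mathcal{M}[1/z]$. Since $S$ is normal affine, after refining by a Zariski cover I may assume $\mathcal{M} = \mathcal{O}_S[[z]]^n$ is trivial. The assumption that the first generic break point is $j=(1,0)$ then says that at the generic point $\eta$ the isocrystal $(\mathcal{M}_\eta[1/z],\phi)$ has a unique slope-$0$ sub-isocrystal, which is of rank $1$, while all remaining slopes are strictly positive.

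First I would exhibit a preferred rank-$1$ sublattice. The Dieudonn\'e--Manin decomposition at $\eta$ gives a unique $\phi$-stable rank-$1$ sublattice $L_\eta \subseteq \mathcal{M}_\eta$ on which $\phi$ acts by a unit in $\mathcal{O}_\eta[[z]]$. Let $\bar L \subseteq \mathcal{M}$ be its scheme-theoretic closure, and let $U' \subseteq S$ be the open locus where $\bar L$ is a rank-$1$ direct summand of $\mathcal{M}$, $\phi$-stable with $\phi$ acting as a unit. The inclusion $U' \subseteq U_j$ is immediate, since $\bar L$ supplies a rank-$1$ slope-$0$ sub-isocrystal pointwise on $U'$. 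For $U_j \subseteq U'$: at any $x \in U_j$, Grothendieck's specialization theorem combined with the generic hypothesis forces slope $0$ to occur with multiplicity \emph{exactly} one at $x$. Indeed, by convexity and $\nu_x \preceq \nu_\eta$ the specialization polygon must coincide with the generic one on $[0,1]$ and then rise strictly, for otherwise it would fall below the generic polygon on $(1,2]$. Hence $(\mathcal{M}_x[1/z],\phi)$ has a unique rank-$1$ slope-$0$ sub-isocrystal, which by a specialization argument along valuation rings centered at $x$ must coincide with $\bar L_x$, so $x \in U'$.

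To conclude affineness of $U_j = U'$ over $S$, I would use the boundedness of $\phi$ to reduce the three conditions defining $U'$ --- rank-$1$ direct summand, $\phi$-stability, and $\phi$ acting as a unit --- to a finite-dimensional closed condition on finitely many coefficients of a local generator of $\bar L$. Following Vasiu's argument, the failure locus $S \setminus U_j$ is then cut out by a single Fitting-ideal generator of the torsion quotient $\mathcal{M}/\bar L$, namely the determinant of a suitable square matrix built from $\phi$. This furnishes a regular function $f \in \Gamma(S,\mathcal{O}_S)$ with $U_j = D(f)$, which is therefore affine over $S$. The main obstacle is this last step: showing that the ideal cutting out $S \setminus U_j$ is globally principal, not merely locally so off a codimension-$\geq 2$ subset. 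Vasiu handles the analogous difficulty crystalline-theoretically via a Hartogs-type extension using normality and excellence of $S$, and I expect the same approach to carry over essentially formally to the $\mathcal{O}_S[[z]]$-setting once one verifies that the bound on $b$ cleanly controls the $z$-adic truncation of $\bar L$ uniformly over $S$.
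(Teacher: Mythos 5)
The multiplicity claim is fine (if slope $0$ occurred more than once at $x \in U_j$ then the polygon of $\nu_x$ would drop strictly below $\nu_\eta$ at $2$, contradicting $\nu_x \succeq \nu_\eta$ as polygons), and the idea of reducing to a rank-one slope-zero subobject is indeed what the paper does via the Hodge--Newton filtration (Lemma \ref{prop3}, Proposition \ref{corslopefilt}). However, the two steps where your outline is vaguest are precisely where the paper's proof diverges from your plan, and I don't think your version of them can be made to work as written.

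First, you define $\bar L$ as the scheme-theoretic closure of $L_\eta$ in $\mathcal{M}$ and then treat it as a coherent object on $S$ whose ``niceness locus'' you can control. This is exactly the hard point. A $\phi$-stable rank-one sublattice of the generic fiber does not obviously extend to a well-behaved sub-$\mathcal{O}_S[[z]]$-module: its formation does not commute with specialization, its saturation is hard to control, and the $z$-adic depth to which one can pin it down varies a priori with the point. The paper never constructs such an $\bar L$. Instead it works with \emph{truncated morphisms} $\psi_{d-c}\colon \mathcal{O}_S[[z]]/(z^{d-c}) \to \mathcal{O}_S[[z]]^n/(z^{d-c})$ and $\psi'_{d-c}$ in the opposite direction, constructs them over all DVRs of $U_j$ using Proposition \ref{corslopefilt}, glues them to $\mathcal{O}_{U_j}$ by \cite{Matsumura} Theorem 11.5, extends them to all of $S$ after a type-(2) replacement via (the proof of) \cite{Vasiu} Lemma 6.1.5, and then uses the Admissibility Theorem (via Corollary \ref{coradmissible} and Remark \ref{remcoradmissible}) to promote the truncations to genuine morphisms of local shtukas at each geometric point. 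Admissibility is the mechanism that replaces your lattice $\bar L$; your proposal does not invoke it, and without it I do not see how to control the ``scheme-theoretic closure'' uniformly.

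Second, the endgame is different and yours as stated does not quite parse. You want to exhibit $f$ with $U_j = D(f)$ as a Fitting-ideal generator of ``the torsion quotient $\mathcal{M}/\bar L$,'' but $\mathcal{M}/\bar L$ has rank $n-1$ and is not torsion, so it is not clear which ideal you intend, and the appeal to a Hartogs-type argument is left completely open even though you identify it as the crux. The paper's conclusion is not of the form $U_j = D(f)$ at all: after the type-(1) and type-(2) replacements, it shows that the composed truncated morphism $\psi'_{d-c}\circ\psi_{d-c}$ is a nonzero $\sigma$-invariant endomorphism of $(\mathbb{F}_q[[z]],\sigma)$, deduces that $\psi'_x\neq 0$ at \emph{every} point $x$ of the replaced $S$, and hence that slope $0$ occurs everywhere, i.e.\ $U_j = \tilde S$ is all of an affine scheme. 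That is a qualitatively different route to affineness than constructing a global equation. To make your plan rigorous you would essentially have to re-derive the truncation-plus-admissibility machinery, at which point you would have reproduced the paper's argument rather than an alternative to it.
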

\begin{proof}[Proof of Theorem \ref{thm1} using Theorem \ref{thm1'}]
We begin by justifying the assumption $G=\GL_n$. Assume that Theorem \ref{thm1} is true for $G=\GL_n$. This implies the theorem for all $G$ which are a product of finitely many factors $\GL_{r_i}$. For general $G$ one considers a finite number of representations $\lambda_i:G\rightarrow \GL(V_{\lambda_i})$ for $i=1,\dotsc,l$ distinguishing the different Newton polygons on $S$ (for example one for each simple factor of the adjoint group $G_{\ad}$). Then the Newton stratifications of $S$ corresponding to  $b$ and  $(\lambda_1(b),\dotsc,\lambda_l(b))$ coincide.

The assumptions on $S$ made in Theorem \ref{thm1'} can be justified in the same way as in Vasiu's proof of the purity theorem \cite{Vasiu}, Proof of Theorem 6.1, p. 291.

It remains to prove that we may assume that $j$ is the first break point and equal to $(1,0)$. Let $(\Lambda,b\sigma)$ be the local shtuka of rank $r$ corresponding to $b$ whose Newton stratification we want to study. Let $(m,m')\in \mathbb{Z}^2$ be the first break point of its Newton polygon. Replacing $\Lambda$ by its $m$th exterior power $\wedge^m(\Lambda)$ we obtain a local shtuka whose Newton slopes in the various points of $S$ are of the form $\nu_{i_1}+\dotsm+\nu_{i_m}$ where $1\leq i_1<\dotsm<i_m\leq r$ are pairwise distinct and where the $\nu_i$ are the Newton slopes of $b$ in that point. In particular, the Newton stratifications induced by $\Lambda$ and $\wedge^m(\Lambda)$ coincide, and $(1,m')$ is the first break point of the Newton polygon of $\wedge^m(\Lambda)$. Hence we may assume that $m=1$. Multiplying $b$ by the central element $z^{-m'}\in LG(\mathbb{F}_q)$ we may assume that $m'=0$. 
\end{proof}

\subsection{Slope filtrations up to isogeny}\label{subsec4.2}
In this subsection we assume that $G=\GL_n$, that $T$ is the diagonal torus and that $B$ is the Borel subgroup of lower triangular matrices. We denote by $P$ the maximal standard parabolic subgroup of $G$ corresponding to the simple root $e_n-e_{n-1}$, by $M$ its Levi component containing $T$, and by $N$ its unipotent radical. Explicitly, we have that $M\cong \GL_{n-1}\times \mathbb{G}_m$ consists of diagonal block matrices and $P=MB$. We denote by $LP$ and $LN$ the loop groups of $P$ respectively of $N$ which are defined in the same way as for reductive groups.

\begin{lemma}\label{lemzink}
Let $\underline{\mathcal{G}}$ be a local shtuka of rank $n$ over a field $k$ which is bounded by some $\mu\in X_*(T)_{\dom}$ and whose Newton slopes are all non-negative. Then there is an isogeny $\rho:\underline{\mathcal{G}}\rightarrow \underline{\mathcal{H}}$ such that the rank of its cokernel is bounded by a bound $\mu'$ which only depends on $\mu$ and such that $\underline{\mathcal{H}}$ has only non-negative Hodge slopes.
\end{lemma}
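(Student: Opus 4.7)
The plan is to let $\underline{\mathcal H}=(\tilde\Lambda,\phi)$ where
\[
\tilde\Lambda \;:=\; \sum_{i\geq 0}\phi^i(\Lambda)\;\subseteq\; N:=\Lambda\otimes_{k[[z]]}k\dl z\dr
\]
is the minimal $\phi$-stable $k[[z]]$-sublattice of the ambient isocrystal that contains the underlying lattice $\Lambda$ of $\underline{\mathcal G}$, and to take $\rho$ to be the inclusion. Since this construction is canonical I may assume $k$ algebraically closed from the start.

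To see that $\tilde\Lambda$ is indeed a lattice, note that the Dieudonn\'e--Manin classification writes $N=\bigoplus_\lambda N_\lambda$ with $N_\lambda$ isoclinic of slope $\lambda\geq 0$, so each $N_\lambda$ admits a $\phi$-stable lattice $M_\lambda$, and $M:=\bigoplus_\lambda M_\lambda$ is a $\phi$-stable lattice in $N$. Commensurability gives $\Lambda\subseteq z^{-c}M$ for some $c\geq 0$, and because $\sigma(z)=z$ the lattice $z^{-c}M$ is itself $\phi$-stable, so $\phi^i(\Lambda)\subseteq z^{-c}M$ for every $i\geq 0$. Hence $\tilde\Lambda\subseteq z^{-c}M$ is a finitely generated $k[[z]]$-module of rank $n$. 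By construction $\phi(\tilde\Lambda)\subseteq\tilde\Lambda$, so $\underline{\mathcal H}$ is a local shtuka of rank $n$ with non-negative Hodge slopes, and the inclusion $\rho:\underline{\mathcal G}\hookrightarrow\underline{\mathcal H}$ is the required $\phi$-equivariant isogeny.

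It remains to bound the Hodge type $\mu_\rho\in X_*(T)_{\dom}$ of $\rho\in K$ by some $\mu'$ depending only on $\mu$. Iterating the standard estimate $(Kz^\mu K)(Kz^\mu K)\subseteq\bigcup_{\eta\preceq 2\mu}Kz^\eta K$ for products of Cartan double cosets shows that $\phi^i(\Lambda)\subseteq z^{i\mu_1}\Lambda$, where $\mu_1$ is the smallest (possibly negative) entry of $\mu$. Combined with the fact that the ascending chain $\Lambda_N:=\sum_{j\leq N}\phi^j(\Lambda)$ is eventually stationary, this yields the desired bound once the stabilization index $N$ can be shown to depend on $\mu$ alone. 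Establishing this uniform stabilization is the main obstacle, since the commensurability constant $c$ above may a priori depend on the specific $b$. To overcome it one appeals to Mazur's inequality $\nu\preceq\mu$, which bounds the denominators of the Newton slopes (and hence the periods of the isoclinic summands $N_\lambda$) in terms of $\mu$, and argues from this that no new contributions to $\Lambda_N$ can appear once $N$ exceeds an explicit function $N_0(\mu)$; the resulting inclusion $\tilde\Lambda=\Lambda_{N_0}\subseteq z^{N_0\mu_1}\Lambda$ then yields a $\mu'$ depending only on $\mu$.
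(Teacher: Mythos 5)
You have the right construction — taking the saturation $\tilde\Lambda=\sum_{i\geq 0}\phi^i(\Lambda_0)$ inside a $\phi$-stable lattice and letting $\rho$ be the inclusion is exactly what the paper does — and you correctly identify where the difficulty lies: bounding the index at which the ascending chain $\Lambda_N=\sum_{j\leq N}\phi^j(\Lambda_0)$ stabilizes, uniformly in $b$. But the device you sketch to close this gap does not work, and the device the paper uses is different. The paper invokes the Nakayama/Cayley--Hamilton-type argument from Zink's proof of Lemma~9 of \cite{Zink2}: once one knows $\Lambda_0$ sits inside \emph{some} $\phi$-stable lattice $\Lambda$, the sum $\Lambda'=\Lambda_0+\phi(\Lambda_0)+\dotsm+\phi^{n-1}(\Lambda_0)$ is already $\phi$-stable. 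That is, the chain stabilizes after at most $n-1$ steps, where $n$ is the rank — a constant entirely independent of $b$. Combined with the fact that $(b\sigma)^i$ is bounded by $i\cdot\mu$ for $i\leq n-1$, this immediately gives a bound $\mu'$ on the cokernel depending only on $\mu$.

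Your proposed replacement — appealing to Mazur's inequality to bound the denominators of the Newton slopes, hence the periods of the isoclinic summands $N_\lambda$, and claiming this bounds the stabilization index — is not a proof and I do not see how to make it one. Bounded periods of the isoclinic pieces do not control how deeply $\Lambda_0$ sits inside the $\phi$-stable lattice $M$: the commensurability constant $c$ you introduce measures exactly this, and it is not a priori bounded by the slope data. Without the Zink-style argument showing that the chain closes up within $n-1$ steps, the bound on $\rho$'s cokernel is not established, and that bound is the entire content of the lemma beyond mere existence of $\underline{\mathcal H}$. One secondary difference worth noting: the paper avoids Dieudonn\'e--Manin and instead exhibits an explicit $\phi$-stable lattice via a concrete representative $x\in[b]\cap\widetilde W$ with non-negative translation part; this is a deliberate choice to keep the argument independent of Corollary~\ref{cormazur}, which is proved later from this lemma. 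Your use of Dieudonn\'e--Manin (for existence only) is legitimate and not circular, but the explicit construction in the paper is worth understanding since the same elementary spirit runs through the rest of Section~\ref{secpurity}.
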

This is essentially the function field analog of \cite{Zink2}, Lemma 9. The construction of $x$ and $\mu'$ below is done in an explicit way because we do not want to use Corollary \ref{cormazur}. Also note that we do not assume $k$ to be algebraically closed. 
\begin{proof}
The local shtuka $\underline{\mathcal{G}}$ is isomorphic to $(k[[z]]^n,b\sigma)$ for some $b\in L\GL_n(k)$. We claim that there is an element $x\in [b]\cap \widetilde{W}$ whose translation part has only non-negative slopes. To construct $x$ let $\nu$ be the Newton point of $b$. We write the break points and the end point of the associated polygon in the form $(n_1+\dotsm+n_i,m_1+\dotsm+m_i)$ for $i=1,\dotsc,j$ and $m_i,n_i\in \mathbb{N}$ with $n_i>0$ and $n_1+\dotsm+n_j=n$. The slope of the $i$th part of the Newton polygon is equal to $m_i/n_i$. Let $M_{\nu}$ be the centralizer of the Newton point of $x$, the standard Levi subgroup of $\GL_n$ isomorphic to $\GL_{n_1}\times\dotsm\times \GL_{n_j}$. For each simple factor $\GL_{n_i}$ let $x_i=\pi(n_i)^{m_i}\in \widetilde{W}$ where $\pi(n_i)$ maps the $l$th basis vector $e_l$ to the $(l+1)$st for $l<n_i$ and to $ze_1$ for $l=n_i$. A direct calculation shows that the slopes of the translation part of $x_i$ are all in $\{\lfloor m_i/n_i\rfloor,\lceil m_i/n_i\rceil\}$. Furthermore, $x_i$ is basic (in $\GL_{n_i}$) of slope $m_i/n_i$. Let $x=(x_1,\dotsc,x_j)\in \widetilde{W}$. Then the Newton points of $b$ and $x$ coincide, hence $x\in [b]$. Let $\mu'\in X_*(T)$ be the dominant element conjugate to the translation part of $x$. As all Newton slopes $m_i/n_i$ of $b$ are non-negative, the same holds for the slopes of $\mu'$. 

By construction there is a $g\in L\GL_n(\overline{k})$ such that $g^{-1}b\sigma(g)=x\in K(k)z^{\mu'}K(k)$. This implies that $\Lambda=g^{-1}(\overline{k}[[z]]^n)$ satisfies  
\begin{equation}\label{eqzink1}
b\sigma(\Lambda)\subseteq \Lambda. 
\end{equation}
By replacing $\Lambda$ by $z^c\Lambda$ for a sufficiently small $c\in\mathbb{Z}$ we may in addition assume that 
\begin{equation}\label{eqzink2}
\overline{k}[[z]]^n\subseteq\Lambda.
\end{equation}
Finally we may choose a minimal $\Lambda$ among those satisfying \eqref{eqzink1} and \eqref{eqzink2}. Let $\Lambda_0=k[[z]]^n$ and let $\Lambda'=\Lambda_0+b\sigma(\Lambda_0)+\dotsm+(b\sigma)^{n-1}(\Lambda_0)\subseteq \Lambda$. Then the same (Nakayama-)argument as in \cite{Zink2}, proof of Lemma 9 shows that $b\sigma(\Lambda')\subseteq \Lambda'$, hence all Hodge slopes of $(\Lambda',b\sigma)$ are non-negative. Furthermore $\Lambda'$ is defined over $k$. As $b\sigma$ is bounded by $\mu$, the $(b\sigma)^i$ are bounded by $i\cdot \mu$ (where the multiplication is componentwise). Hence the rank of $\Lambda'/\Lambda_0$ is bounded by a bound which only depends on $\mu$. This induces the desired isogeny between $\underline{\mathcal{G}}=(\Lambda_0,b\sigma)$ and $\underline{\mathcal{H}}=(\Lambda',b\sigma)$.
\end{proof}

\begin{lemma}\label{lem2}
Let $\mu\in X_*(T)$ be dominant. Then there exists a constant $l=l(\mu)$ with the following property. Let $R$ be a discrete valuation ring and let $b\in L\GL_n(R)$ be bounded by $\mu$ and such that the Newton slopes of $b$ are non-negative in the generic and in the special point of $\Spec R$. Then there is a finite \'etale extension $R'$ of $R$ and $g\in L\GL_n(R')$ bounded by some $\mu'=(\mu'_1,\dotsc,\mu'_{n})$ with $0\leq \mu'_i\leq l$ for all $i$ and such that $g^{-1}b\sigma(g)\in L\GL_n(R')$ has only non-negative Hodge slopes in every point of $\Spec R'$.
\end{lemma}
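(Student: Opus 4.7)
The plan is to construct $g$ via an explicit $\phi$-stable $R[[z]]$-lattice inside $R\dl z \dr^n$, generalizing the construction in the proof of Lemma \ref{lemzink}. Write $\phi = b\sigma$ and $\Lambda_0 = R[[z]]^n$. Since $b$ is bounded by $\mu$, fix a constant $c = c(\mu)$ with $\phi(\Lambda_0) \subseteq z^{-c}\Lambda_0$, so that $\phi^i(\Lambda_0) \subseteq z^{-ic}\Lambda_0$ for all $i \geq 0$. Put $S = \sum_{i=0}^{n-1} \phi^i(\Lambda_0)$ and define $\Lambda'$ to be the $\pi$-saturation of $S$ inside $R\dl z \dr^n$, i.e.,
$$\Lambda' = \{v \in R\dl z \dr^n : \pi^k v \in S \text{ for some } k \geq 0\}.$$
Then $\Lambda_0 \subseteq \Lambda' \subseteq z^{-(n-1)c}\Lambda_0$, and $l := (n-1)c$ will serve as the uniform bound promised by the Lemma.

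Next I would verify $\phi(\Lambda') \subseteq \Lambda'$ over $R$. It suffices to show $\phi^n(\Lambda_0) \subseteq S$, since then $\phi(S) \subseteq S$, and the identity $\phi(\pi^k v) = \pi^{qk}\phi(v)$ (using $\sigma(\pi) = \pi^q$) upgrades this to $\phi(\Lambda') \subseteq \Lambda'$. Lemma \ref{lemzink} applied at the special fiber gives $\phi^n(\Lambda_{0,s}) \subseteq S_s$. The $R[[z]]$-submodules $S/\Lambda_0$ and $(S+\phi^n(\Lambda_0))/\Lambda_0$ of $z^{-nc}\Lambda_0/\Lambda_0$ are submodules of a finitely generated $R$-free module, hence are themselves $R$-free, so tensoring with $k$ is exact on the inclusion and identifies them with their images at the special fiber. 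Consequently $P := (S+\phi^n(\Lambda_0))/S$ is a finitely generated $R$-module with $P \otimes_R k = 0$, so $P = 0$ by Nakayama.

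The main obstacle is the freeness of $\Lambda'$ over $R[[z]]$: the naive sum $S$ need not be free, because $z^{-l}\Lambda_0/S$ can have associated $R[[z]]$-primes at $(\pi)$, producing quotients like $R/\pi^a R$ that obstruct the depth. The $\pi$-saturation removes this. By construction $z^{-l}\Lambda_0/\Lambda'$ is $\pi$-torsion-free, so $\pi$ is a nonzerodivisor on it, giving $\operatorname{depth}_{R[[z]]}(z^{-l}\Lambda_0/\Lambda') \geq 1$. The depth lemma applied to $0 \to \Lambda' \to z^{-l}\Lambda_0 \to z^{-l}\Lambda_0/\Lambda' \to 0$ then yields $\operatorname{depth}_{R[[z]]}(\Lambda') \geq \min(2, 1+1) = 2$. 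Since $R[[z]]$ is a regular local ring of dimension $2$, Auslander-Buchsbaum gives $\operatorname{pd}_{R[[z]]}(\Lambda') = 0$, so $\Lambda' \cong R[[z]]^n$.

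Finally, set $\tilde{\Lambda} = z^l \Lambda'$, so that $z^l\Lambda_0 \subseteq \tilde{\Lambda} \subseteq \Lambda_0$ and $\phi(\tilde{\Lambda}) \subseteq \tilde{\Lambda}$. Choose any $R[[z]]$-basis $v_1, \dotsc, v_n$ of $\tilde{\Lambda}$ and define $g \in L\GL_n(R)$ by $g(e_i) = v_i$. Then $g(\Lambda_0) = \tilde{\Lambda}\subseteq\Lambda_0$ forces $g$ into $\operatorname{Mat}_n(R[[z]])\cap\GL_n(R\dl z\dr)$ and bounds its Hodge point by $l$, while $g^{-1}b\sigma(g)(\Lambda_0) = g^{-1}(\phi(\tilde{\Lambda}))\subseteq g^{-1}(\tilde{\Lambda}) = \Lambda_0$ gives the non-negativity of the Hodge slopes of $g^{-1}b\sigma(g)$ at every point of $\Spec R$. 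The finite \'etale extension permitted by the statement is not needed; one may take $R' = R$.
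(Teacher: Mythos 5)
Your construction takes a genuinely different route from the paper. The paper uses Lemma~\ref{lemzink} only at the generic point to produce a lattice $g_\kappa K\in\Gr_{\preceq\mu'}(\Frac R)$, then extends it to a point of $\Gr_{\preceq\mu'}(R)$ via properness of the Schubert variety (valuative criterion), and finally lifts to $L\GL_n(R')$. You instead build the lattice over $R[[z]]$ directly as the $\pi$-saturation $\Lambda'$ of $S=\sum_{i<n}\phi^i(\Lambda_0)$ and establish freeness by depth and Auslander--Buchsbaum. That freeness argument is correct, the bound $l=(n-1)c$ works, and your observation that $R'=R$ suffices is also right for $\GL_n$ (a $K$-torsor over the local scheme $\Spec R$ is trivial, so the \'etale extension the paper imports from [HV2, Lemma~2.3] is not needed in this case).

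There is, however, a genuine gap in your verification of $\phi$-stability. You reduce it to $\phi^n(\Lambda_0)\subseteq S$ and justify this by claiming that, since $S/\Lambda_0$ and $(S+\phi^n(\Lambda_0))/\Lambda_0$ are $R$-free, ``tensoring with $k$ is exact on the inclusion,'' so that $(S+\phi^n(\Lambda_0))/S\otimes_R k=0$. This step is wrong: an inclusion $M\hookrightarrow N$ of finite free $R$-modules over a DVR need \emph{not} remain injective after $\otimes_R k$ unless $N/M$ is $R$-torsion-free (already $\pi R\subseteq R$ fails; a rank-$2$ example with $\bar M=\bar N$ but $M\subsetneq N$ is $M=R(1,0)+R(0,\pi^2)\subseteq N=R(1,0)+R(0,\pi)$). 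So the special-fiber comparison does not give $\phi^n(\Lambda_0)\subseteq S$, and indeed that inclusion can fail --- precisely because $z^{-l}\Lambda_0/S$ can have $\pi$-torsion, which is the very phenomenon you saturate away. The fix is that you do not need $\phi^n(\Lambda_0)\subseteq S$: the weaker inclusion $\phi^n(\Lambda_0)\subseteq\Lambda'$ already yields $\phi(S)\subseteq\Lambda'$ and then, by the saturation argument using $\sigma(\pi)=\pi^q$, $\phi(\Lambda')\subseteq\Lambda'$. And $\phi^n(\Lambda_0)\subseteq\Lambda'$ follows directly from Lemma~\ref{lemzink} applied at the \emph{generic} fiber: for $v\in\phi^n(\Lambda_0)\subseteq R\dl z\dr^n$ one has $v\in S\otimes_R\Frac R$, hence $\pi^k v\in S$ for some $k$, hence $v\in\Lambda'$. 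Replace the special-fiber Nakayama step by this generic-fiber saturation argument and the proof is complete.
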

\begin{proof}
Let $k=\Frac R$. Then by Lemma \ref{lemzink} we obtain a $\mu'$ and a $g_{k}\in L\GL_n(k)$ with the desired properties over $\Spec k$. Apparently the same holds for every element of $g_{k}K(k)$. The image of $g_{k}$ in the affine Grassmannian $\Gr=L\GL_n/K$ lies in the closed subscheme $\Gr_{\preceq\mu'}$. Note that $\Gr_{\preceq\mu'}$ is projective over $\mathbb{F}_q$. As $R$ is a discrete valuation ring, there is a $\overline{g}\in \Gr_{\preceq\mu'}(R)$ with generic point $g_{k}K_{k}$. By \cite{HV2} Lemma 2.3, $\overline{g}$ can be lifted to an element $g\in L\GL_n(R')$ over some finite \'etale extension $R'$ of $R$. This element generically satisfies the claimed conditions, and hence it satisfies them on all of $\Spec R'$.
\end{proof}

The following lemma is one particular instance of the Hodge-Newton filtration. Although this filtration is known to exist in various contexts we could not find a reference for this precise statement, so we include a proof here.
\begin{lemma}\label{prop3}
Let $R$ be a discrete valuation ring and let $\underline{\mathcal{G}}=(R[[z]]^n,b\sigma)$ be a local shtuka of rank $n$ on $\Spec R$. Assume that $\underline{\mathcal{G}}$ is effective, i.e.~its Hodge slopes are non-negative in all points of $\Spec R$. Assume furthermore that $(1,0)$ is a break point of the Newton polygon of $\underline{\mathcal{G}}$ in all points of $\Spec R$. Then there is a filtration $\underline{\mathcal{G}_0}\hookrightarrow\underline{\mathcal{G}}$ where $\underline{\mathcal{G}_0}=(R[[z]],b_0\sigma)$ for some $b_0$ of (Newton and Hodge) slope $0$, and where $\underline{\mathcal{G}}/\underline{\mathcal{G}_0}$ is a local shtuka of rank $n-1$ having strictly positive Newton slopes. 
\end{lemma}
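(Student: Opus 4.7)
My plan is to realise $\underline{\mathcal G_0}$ as the unit-root sub-shtuka of $\underline{\mathcal G}$, constructed as an intersection of iterated Frobenius images. Setting $b_m := b\,\sigma(b)\,\sigma^2(b)\cdots\sigma^{m-1}(b)$, so that $(b\sigma)^m = b_m\sigma^m$, effectivity of $\underline{\mathcal G}$ gives $b_m\in M_n(R[[z]])$; hence $\Lambda_m := b_m\cdot R[[z]]^n$ is a free rank-$n$ sub-$R[[z]]$-module of $\Lambda_0 := R[[z]]^n$, and we obtain a descending chain $\Lambda_0\supseteq\Lambda_1\supseteq\Lambda_2\supseteq\cdots$. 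Put
\[
\mathcal G_0 \,:=\, \bigcap_{m\ge 0}\Lambda_m.
\]
Stability under $b\sigma$ is automatic: if $v = b_{m-1}w \in \Lambda_{m-1}$, then $b\sigma(v) = b\,\sigma(b_{m-1})\sigma(w) = b_m\sigma(w) \in \Lambda_m$, so intersecting over all $m$ we get $b\sigma(\mathcal G_0)\subseteq\mathcal G_0$.

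Next I would analyse $\mathcal G_0$ fibrewise on $\Spec R$. At either the generic or the special point $x$, Dieudonn\'e--Manin applied to $\underline{\mathcal G}_x \otimes \overline{k(x)}\dl z\dr$ produces an isotypic slope decomposition; by hypothesis on the break point $(1,0)$ the slope-$0$ component has rank $1$. On that summand $b\sigma$ preserves a lattice, so the iterated Frobenius images intersect to that rank-$1$ lattice; on the complementary strictly-positive-slope summand $b\sigma$ shrinks any lattice into arbitrarily high powers of $z$, so the iterated images intersect trivially. Galois descent returns this picture to $k(x)$, identifying $\mathcal G_0\otimes k(x)$ as the unit-root piece of rank $1$. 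I would then promote this to freeness of $\mathcal G_0$ over $R[[z]]$: since $R$ is a DVR, $R[[z]]$ is a two-dimensional regular local ring, hence a UFD, so a saturated reflexive rank-$1$ sub-$R[[z]]$-module of the free module $\Lambda_0$ is automatically invertible and therefore free. Saturation additionally forces $\Lambda_0/\mathcal G_0$ to be free of rank $n-1$, and the descended Frobenius gives it the structure of a local shtuka.

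The slope calculation is then fibrewise: $\underline{\mathcal G_0}$ has Newton and Hodge slope $0$ at each point of $\Spec R$, while the Newton polygon of $\underline{\mathcal G}/\underline{\mathcal G_0}$ at each point is the original polygon with the slope-$0$ segment removed, so all its slopes are strictly positive. The main obstacle I expect is the passage from the fibrewise rank-$1$ description to saturation and freeness of $\mathcal G_0$ over $R[[z]]$. The individual $\Lambda_m$ are \emph{not} themselves saturated in $\Lambda_0$ (their cokernels are $z$-power torsion modules recording the Hodge data), and an intersection of non-saturated submodules need not in general be saturated. Establishing saturation in this setting requires combining the fibrewise Dieudonn\'e--Manin analysis with the UFD structure of $R[[z]]$ and using the hypothesis that $(1,0)$ is a break point of the Newton polygon at \emph{every} point of $\Spec R$, which forces the positive-slope contributions in the $\Lambda_m$ to disappear in the intersection while the slope-$0$ lattice persists.
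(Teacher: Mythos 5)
Your construction of $\mathcal G_0$ as $\bigcap_m b_m\cdot R[[z]]^n$ and your verification of $b\sigma$-stability coincide with the paper's. But the proposal leaves the decisive step --- that $\mathcal G_0$ is a \emph{free rank-one direct summand} of $R[[z]]^n$ --- as a plan rather than an argument, and you correctly flag this yourself as the main obstacle. Two things in the sketched plan do not close as stated. First, the fibrewise Dieudonn\'e--Manin decomposition is only rational, and more importantly the fiber $\mathcal G_0\otimes_{R[[z]]} k(x)[[z]]$ of the global intersection need not agree with the fibrewise intersection $\bigcap_m\bigl(\Lambda_m\otimes k(x)[[z]]\bigr)$: tensoring does not commute with infinite intersections, so even the rank-one statement for $\mathcal G_0$ over $R[[z]]$ does not follow directly from the slope picture at the two points. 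Second, reflexivity of $\mathcal G_0$ (automatic, as an intersection of free hence reflexive submodules) plus the UFD property of $R[[z]]$ gives freeness of $\mathcal G_0$ once rank one is known, but it does \emph{not} give that $R[[z]]^n/\mathcal G_0$ is free of rank $n-1$; for the quotient to be a local shtuka you need $\mathcal G_0$ to be a direct summand, i.e.\ saturated, and that is exactly the extra content that the fibrewise slope analysis alone does not provide.

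The paper (following Katz, Theorem 2.4.2) closes this gap by a different, more elementary device. From effectivity and the requirement that $(1,0)$ lie on the Newton (hence Hodge) polygon at the closed point, it deduces that the matrix entries of $b$ --- and of every $b_c$ --- generate the \emph{unit ideal} of $R[[z]]$. From the multiplicity-one hypothesis on the slope-zero part it deduces that the smallest Newton slope of $\wedge^2 b$ is some $\varepsilon>0$ uniformly on the two points of $\Spec R$, so that for every $r$ there is $c_r$ with all $2\times 2$ minors of $b_c$ lying in $z^rR[[z]]$ once $c\ge c_r$. Over the local ring $R[z]/(z^r)$ a matrix whose entries generate the unit ideal and whose $2\times 2$ minors vanish has image a free rank-one direct summand; hence $\Lambda_c\bmod z^r$ is a free rank-one direct summand for $c\ge c_r$, these stabilise, and passing to the inverse limit shows directly that $\mathcal G_0=\bigcap_c\Lambda_c$ is a free rank-one direct summand of $R[[z]]^n$ with free quotient of rank $n-1$. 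This bypasses any saturation argument and is the key idea missing from the proposal.
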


\begin{proof}
We follow the proof of \cite{Katz}, Theorem 2.4.2. Let $b=(b_{ij})\in {\rm Mat}_{n\times n}(R[[z]])$ be the matrix corresponding to $b$. Considering the image of $b$ modulo $z$ and in the special point of $\Spec R$, we see that the $b_{ij}$ generate the unit ideal in $R[[z]]$. The same is true for the matrix coefficients of every power $(b\sigma)^c$. The Newton slope $0$ occurs with multiplicity 1. Thus for every $r$ there is a $c_r$ such that for every $c\geq c_r$ all Hodge slopes of $\wedge^2(b\sigma)^c$ are $\geq r$ at each point of $\Spec R$. Hence all $2\times 2$-minors of $(b\sigma)^c$ are in $z^rR[[z]]$. Let $\underline{\mathcal{G}_0}$ be the intersection of the images of the $(b\sigma)^c$, then $\underline{\mathcal{G}_0}\cong(R[[z]],b_0\sigma)$ for some $b_0$ and $\mathcal{G}/\mathcal{G}_0$ is a free $R[[z]]$-module of rank $n-1$. The slope assertions follow as they can be checked pointwise. 
\end{proof}

\begin{remark}
From the lemma one can easily deduce the following statement which is the general form of the Hodge-Newton filtration for local shtukas over discrete valuation rings, but which we do not need in this article. Let $R$ be a discrete valuation ring and let $\underline{\mathcal{G}}=(R[[z]]^n,b\sigma)$ be a local shtuka on $\Spec R$ of rank $n$. Assume that there is a break point $(a,m)$ of the Newton polygon of $\underline{\mathcal{G}}$ which also lies on the Hodge polygon in all points of $\Spec R$. Then there is a filtration $\underline{\mathcal{G}_0}\hookrightarrow\underline{\mathcal{G}}$ associated with $(a,m)$. Here $\underline{\mathcal{G}_0}\cong(R[[z]]^a,b_0\sigma)$ is such that its Hodge and Newton polygon consist of the first $a$ components of the corresponding polygons for $\underline{\mathcal{G}}$ and $\underline{\mathcal{G}}/\underline{\mathcal{G}_0}$ is a local shtuka of rank $n-a$ whose associated polygons consist of the remaining $n-a$ components. Indeed, this follows by a standard reduction argument, applying the lemma to $(\wedge^a(R[[z]]^n),z^{-m}\wedge^a(b\sigma))$, see \cite{Katz}, proof of Theorem 2.4.2.
\end{remark}

\begin{prop}\label{corslopefilt}
Let $\mu\in X_*(T)_{\dom}$. Then there exists a constant $l=l(\mu)$ with the following property. Let $d\in\mathbb{N}$. Let $R$ be a discrete valuation ring and let $b\in L\GL_n(R)$ be bounded by $\mu$. We assume that $(1,0)$ is a break point of the Newton polygon of $b$ in each point of $\Spec R$. Then there is a finite \'etale extension $R'$ of $R$ and a $g\in L\GL_n(R')$ bounded by some $\mu'=(\mu'_1,\dotsc,\mu'_{n})$ with $0\leq \mu'_i\leq l$ for all $i$ and such that $b'=g^{-1}b\sigma(g)\in LP(R')$ has only integral coordinates and that the image $b'_0$ in  the quotient $L\mathbb{G}_m(R')$ of $(L\GL_{n-1}\times L\mathbb{G}_m)(R')\cong LM(R')$ satisfies $b'_0\equiv 1\pmod{z^dR'[[z]]}$.

Furthermore, there is a finite purely inseparable extension $\widetilde{R}$ of $R'$ and an element $h\in (K\cap LN)(\widetilde{R})$ such that $h^{-1}b'\sigma(h)=b_Mb_N$ for $b_M\in LM(\widetilde{R})$ and $b_N\in (K\cap LN)(\widetilde{R})$ with $b_N\equiv 1\pmod{z^d}$.
\end{prop}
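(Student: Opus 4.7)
The plan is to chain Lemmas~\ref{lem2} and \ref{prop3} for the first assertion, then handle the mod-$z^d$ congruences via Lang-type arguments. To begin, Lemma~\ref{lem2} produces a finite \'etale extension $R'/R$ and $g \in L\GL_n(R')$ bounded by some $\mu'$ with $0 \le \mu'_i \le l(\mu)$ such that $g^{-1}b\sigma(g)$ has only non-negative Hodge slopes on $\Spec R'$, and hence has entries in $R'[[z]]$. Then Lemma~\ref{prop3} applies to the local shtuka $(R'[[z]]^n, g^{-1}b\sigma(g)\cdot\sigma)$: effectiveness is built in, and the assumption that $(1,0)$ is a Newton break point is preserved by $\sigma$-conjugation. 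The lemma yields a rank-$1$ sub-shtuka of slope $0$; a further $K(R')$-change of basis moves it to the span of the last basis vector (this modifies $g$ by right $K$-multiplication and leaves its double-coset bound unchanged), placing $g^{-1}b\sigma(g) \in LP(R')$. Its $L\mathbb{G}_m$-component $b^{(1)}_0$ is automatically a unit in $R'[[z]]$, since both its Hodge and its Newton slope vanish.

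To secure $b'_0 \equiv 1 \pmod{z^d}$, I further $\sigma$-conjugate by an element of the form $(I_{n-1}, h_0) \in K(R')$ with $h_0 \in R'[[z]]^\times$, which affects only the $\mathbb{G}_m$-component. Solving $h_0^{-1}b^{(1)}_0\sigma(h_0) \equiv 1 \pmod{z^d}$ order by order in $z$ reduces to a single $(q-1)$-th root extraction for the constant coefficient and separable Artin--Schreier equations for the coefficients at $z^i$ with $1 \le i < d$, each solvable over a finite \'etale extension of the preceding ring. After finitely many such extensions I obtain the first assertion, with the bound on $g$ preserved throughout since every modification belongs to $K$.

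For the second assertion, write $b' = \begin{pmatrix} A & 0 \\ v & d \end{pmatrix}$ in block form adapted to $P = MN$, and set $h = \begin{pmatrix} I & 0 \\ w & 1 \end{pmatrix} \in K \cap LN$ for a row vector $w$. A direct computation yields
\[
h^{-1}b'\sigma(h) = \begin{pmatrix} A & 0 \\ 0 & d \end{pmatrix} \begin{pmatrix} I & 0 \\ d^{-1}(v - wA) + \sigma(w) & 1 \end{pmatrix},
\]
so the condition $b_N \equiv 1 \pmod{z^d}$ becomes the semilinear equation $\sigma(w) \equiv d^{-1}(wA - v) \pmod{z^d}$ for $w \in \widetilde{R}[[z]]^{n-1}$. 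Expanded in powers of $z$, this is a finite system over $\widetilde{R}$; the crucial structural input is that $A$ corresponds to the quotient $\underline{\mathcal{G}}/\underline{\mathcal{G}_0}$ from Lemma~\ref{prop3}, whose Newton slopes are strictly positive, while $d$ has slope $0$. This slope gap should allow one to solve the system inductively on the coefficients of $w$, each step requiring at worst a $q^k$-th root extraction and thus contributing a finite purely inseparable factor to $\widetilde{R}/R'$. The main obstacle I anticipate is precisely this last step: verifying that the number of required root extractions stays bounded (so that $\widetilde{R}/R'$ is genuinely finite) and that the resulting $w$ has integral coefficients, so that $h \in K \cap LN$ as claimed.
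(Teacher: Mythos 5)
Your treatment of the first assertion is correct and essentially identical to the paper's: chain Lemma~\ref{lem2} and Lemma~\ref{prop3}, change basis so that the rank-one slope-zero sub-shtuka becomes $\langle e_n\rangle$, and then fix up the $\mathbb{G}_m$-component by a Lang-type $\sigma$-conjugation $h_0^{-1}b^{(1)}_0\sigma(h_0)\equiv 1$, solved order by order over a finite \'etale extension (a $(q-1)$-th root, then Artin--Schreier equations). This is exactly what the paper does, only phrased as $g\sigma(g)^{-1}\equiv b'_0$.

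For the second assertion you correctly identify the equation $\sigma(w)\equiv d^{-1}(wA-v)\pmod{z^d}$, and you correctly point to the Newton slope gap between $A$ (strictly positive slopes) and $d$ (slope $0$) as the decisive structural input. But you leave precisely the crucial step unproved, and you say so; and moreover your sketch of how it might go (``solve the system inductively on the coefficients of $w$, each step requiring at worst a $q^k$-th root extraction'') is not justified and, taken literally, would fail. At degree $0$ the equation reads $\sigma(w_0)=w_0A_0-v_0$, a genuinely $\sigma$-semilinear system that is generically solvable only after a \emph{separable} extension (for $n=2$: a degree-$q$ polynomial $w_0^q-a_0w_0+v_0$ with $a_0\neq 0$, which is separable), not by root extractions alone. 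The proposition, however, asserts that $\widetilde R/R'$ is purely inseparable, so a degree-by-degree solution of the one-shot equation is the wrong move.

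The paper sidesteps this by an iterative $\sigma$-conjugation that never has to solve a nontrivial semilinear system. Decompose $b'=b'_Mb'_N$ with $b'_N\in LN\cap K$. Factor $b'_N=\delta_1 y_1$ where $\delta_1$ is the part divisible by $z^d$ (so $\delta_1\equiv 1\pmod{z^d}$) and $y_1$ is the truncation to degrees $<d$; set $y_1'=\sigma^{-1}(y_1)$, which exists after a finite purely inseparable extension (just $q$-th roots of finitely many coefficients). Then $\sigma$-conjugate by $(y_1')^{-1}$: the new $N$-part is $(b'_M)^{-1}y_1'b'_M\,\delta_1$, still in $LN\cap K$ because $b'_M$ is integral and $b'_0\equiv 1$. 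Iterating, the relevant map on $LN\cap K$ is $y\mapsto (b'_M)^{-1}\sigma^{-1}(y)b'_M$, which is topologically nilpotent exactly because the Newton slopes of $A$ are strictly greater than that of $d$. Hence after finitely many steps the $<z^d$ truncation vanishes and the remaining $N$-part is $\equiv 1\pmod{z^d}$. Each step contributes only a finite purely inseparable extension, so the total $\widetilde R/R'$ is finite purely inseparable, and the accumulated conjugating element lies in $(K\cap LN)(\widetilde R)$. This is the content missing from your proposal; your one-shot equation is equivalent to it, but the iterative peeling-off is what makes the argument actually go through while keeping the extension purely inseparable.
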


\begin{proof}
From Lemma \ref{lem2} and Lemma \ref{prop3} we obtain the weaker statement where $b'_0\in\mathbb{G}_m(R'[[z]])$ with $v_z(b'_0)=0$ in each point of $\Spec R'$. To replace $b'_0$ by an element as above we need some $g\in \mathbb{G}_m(R''[[z]])$ for some finite \'etale extension $R''$ of $R'$ with $g^{-1}b'_0\sigma(g)\equiv 1\pmod{z^dR''[[z]]}$. This is equivalent to $g\sigma(g)^{-1}\equiv b'_0\pmod{z^dR''[[z]]}$, a relation which can indeed be solved over some finite \'etale extension $R''$ of $R'$.

To show the second statement we decompose $b'$ as $b'_Mb'_N$ with $b'_M\in LM(R')$ and $b'_N\in LN(R')$. From the first assertion we obtain that $b'_N\in LN(R')\cap K(R')$. After passing to a first finite purely inseparable extension of $R'$ we may assume that $b'_N=\delta_1y_1$ for some $\delta_1\in LN(R')\cap K(R')$ with $\delta_1\equiv 1\pmod{z^d}$ and with $y_1=\sigma(y_1')$ for some $y_1'\in LN(R')\cap K(R')$. We $\sigma$-conjugate $b'$ by $(y_1')^{-1}$ to obtain $y_1'b'_M\delta_1=b'_M((b'_M)^{-1}y_1'b'_M\delta_1)$. Here our assumption that the coordinates of $b'_M$ are integral and that $b'_0\equiv 1$ implies that $(b'_M)^{-1}y_1'b'_M\delta_1\in LN(R')\cap K(R')$. We iterate this process. If we write $b'_M=(b'_{n-1},b'_{0})\in (L\GL_{n-1}\times L\mathbb{G}_m)(R')\cong LM(R')$ we have that the Newton slopes of $b'_{n-1}$ are all strictly positive, hence greater than the Newton slope 0 of $b'_0$. This implies that the morphism $LN(k)\cap K(k)\rightarrow LN(k)\cap K(k)$ with $y\mapsto (b'_M)^{-1}\sigma^{-1}(y)b'_M$ is topologically nilpotent for every perfect field $k$. Thus after finitely many steps we have replaced $b'_Mb'_N$ by an element of $b'_M\cdot (LN(R')\cap K(R'))$ whose second factor is congruent to $1$ modulo $z^d$ (a condition that can be checked on geometric points).
\end{proof}

Similar results for $p$-divisible groups by Zink resp. Oort and Zink can be found in \cite{Zink2} and \cite{OortZink}.

\subsection{Admissibility}\label{subsecadm}

In this subsection we allow $G$ to be any split connected reductive group over $\mathbb{F}_q$. We fix some $c\in\mathbb{N}$. Let $K_c$ be the sub-group scheme of $K$ whose $R$-valued points are given by $\{g\in K(R)\mid g\equiv 1\pmod{z^{c}}\}$. We call a subset of $LG(k)$ bounded if it is contained in a finite union of $K$-double cosets. 

Let us recall the following admissibility result.
\begin{thm*}[\cite{HV1}, Theorem 10.1]
Let $k$ be algebraically closed and let $\mathcal{B}$ be a bounded subset of $LG(k)$. Then there is a $c\in \mathbb{N}$ such that for each $d\in\mathbb {N}$, each $b\in \mathcal{B}$ and each $h\in K_{c+d}(k)$ there is an $\alpha\in K_d(k)$ with $bh=\alpha^{-1}b\sigma(\alpha)$. 
\end{thm*}
\begin{cor}\label{coradmissible}
Let $k$ be algebraically closed. Let $n>0$ and $\mu\in X_*(T)_{\dom}$. Then there is a $c>0$ with the following property. Let $\underline{\mathcal{G}}=(k[[z]]^n,b\sigma)$ be a local shtuka bounded by $\mu$. Let $g:k[[z]]\rightarrow k[[z]]^{n}$ be such that $b\circ\sigma(g)\in K_{\GL_n,d}(k)gK_{\GL_1,d}(k)$ for some $d>c$. Then there is an element of $K_{\GL_n,d-c}\,g$ inducing a morphism of local shtukas between $(k[[z]],1\cdot\sigma)$ and $\underline{\mathcal{G}}=(k[[z]]^n,b\sigma)$.
\end{cor}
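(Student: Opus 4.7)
The plan is to reduce the corollary to the Admissibility Theorem by embedding the problem into a larger $\GL_{n+1}$-shtuka.

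First, routine reductions. If $g=0$ the statement is trivial, and otherwise writing $g=z^m g_0$ with $g_0\not\equiv 0\pmod{z}$ and cancelling $z^m$ shows $b\circ\sigma(g_0)\in K_{\GL_n,d}(k)\,g_0\,K_{\GL_1,d}(k)$; a shtuka morphism for $g_0$ yields one for $g$ by multiplication by $z^m$, so I may assume $g$ is primitive. In that case, for $d-c\geq 1$, the orbit $K_{\GL_n,d-c}\cdot g$ coincides with $g+z^{d-c}k[[z]]^n$, because at least one coordinate of $g$ is a unit in $k[[z]]$.

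Next, enlarge to $\GL_{n+1}$ by setting
\[
\hat{b}=\begin{pmatrix}1&0\\0&b\end{pmatrix}\in L\GL_{n+1}(k),\qquad \hat{g}=\begin{pmatrix}1&0\\g&I_n\end{pmatrix}\in K_{\GL_{n+1}}(k),
\]
so that $\hat{b}$ is bounded by $\hat{\mu}=(0,\mu)$. A direct block computation yields $\hat{g}^{-1}\hat{b}\sigma(\hat{g})=H\hat{b}$ with
\[
H=\begin{pmatrix}1&0\\b\sigma(g)-g&I_n\end{pmatrix},
\]
and the hypothesis $b\sigma(g)-g\in z^d k[[z]]^n$ forces $H\in K_{\GL_{n+1},d}(k)$. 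Let $c$ be the constant produced by the Admissibility Theorem applied to $\GL_{n+1}$ and the bounded set of all such $\hat{b}$; this $c$ will work in the corollary.

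I now seek $\alpha\in K_{\GL_{n+1},d-c}(k)$ with $\alpha^{-1}\hat{b}\sigma(\alpha)=H\hat{b}$, since then $\hat{g}\alpha^{-1}$ will $\sigma$-conjugate $\hat{b}$ to itself. The main obstacle is that the Admissibility Theorem a priori produces $\sigma$-conjugates of $\hat{b}$ only in the form $\hat{b}h$, not $h\hat{b}$. I resolve this by the substitution $\alpha=\alpha'H^{-1}$: the desired equation becomes $\alpha'^{-1}\hat{b}\sigma(\alpha')=\hat{b}\sigma(H)$, and since $\sigma(H)\in K_{\GL_{n+1},d}$ with $d>c$, admissibility directly supplies $\alpha'\in K_{\GL_{n+1},d-c}(k)$. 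Then $\alpha=\alpha'H^{-1}$ also lies in $K_{\GL_{n+1},d-c}(k)$, using $H^{-1}\in K_{\GL_{n+1},d}\subseteq K_{\GL_{n+1},d-c}$.

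Finally, extract $g'$ from the first column of $\hat{g}\alpha^{-1}$. Writing this column as $\begin{pmatrix}u\\v\end{pmatrix}$ with $u\in k[[z]]$, $v\in k[[z]]^n$, the identity $\hat{b}\sigma(\hat{g}\alpha^{-1})=(\hat{g}\alpha^{-1})\hat{b}$ applied to the first basis vector (using $\hat{b}e_0=e_0$) forces $\sigma(u)=u$ and $b\sigma(v)=v$. The block form of $\alpha^{-1}\in K_{\GL_{n+1},d-c}$ together with that of $\hat{g}$ shows $u=1+z^{d-c}\xi_0\in k[[z]]^\times$ and $v=gu+z^{d-c}\xi_1$ for some $\xi_0\in k[[z]]$, $\xi_1\in k[[z]]^n$. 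Setting $g':=v/u$, one has $g'-g=z^{d-c}\xi_1/u\in z^{d-c}k[[z]]^n$, so by primitivity $g'\in K_{\GL_n,d-c}\cdot g$, and $b\sigma(g')=b\sigma(v)/\sigma(u)=v/u=g'$, realizing the required morphism of local shtukas between $(k[[z]],1\cdot\sigma)$ and $(k[[z]]^n,b\sigma)$.
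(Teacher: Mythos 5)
Your proof is correct and takes a genuinely different route from the paper. The paper's argument reduces the hypothesis to $b\sigma(g)=hg$ with $h\in K_{\GL_n,d}$, then invokes \cite{HV2}, Lemma 6.3 to rewrite $h^{-1}b$ as $bh'$ with $h'\in K_{\GL_n,d-e}$ (at the cost of a constant $e$ depending on $\mu$), and applies the Admissibility Theorem to $bh'$. You instead embed the problem into $\GL_{n+1}$ via $\hat{b}=\mathrm{diag}(1,b)$ and $\hat{g}=\left(\begin{smallmatrix}1&0\\ g&I_n\end{smallmatrix}\right)$, so that the shtuka-morphism condition for $g$ becomes a $\sigma$-conjugation statement for $\hat{b}$, and you avoid the left/right conversion lemma entirely by the algebraic substitution $\alpha=\alpha'H^{-1}$ together with $K_d\subseteq K_{d-c}$. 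This makes the argument more self-contained (no appeal to the auxiliary boundedness lemma) and loses no additional constant beyond the admissibility $c$. The one thing worth noting is that the $\GL_{n+1}$ embedding is not strictly needed: the same substitution trick, applied with $\beta=\beta'\cdot hk$ directly in $\GL_n$ to the equation $\beta^{-1}b\sigma(\beta)=(hk)^{-1}b$, yields the result without passing to a larger group. Still, the embedding is a clean way to package the computation, and all steps (the reduction to $g$ primitive, the identification $K_{\GL_n,d-c}\cdot g=g+z^{d-c}k[[z]]^n$, the block computation of $\hat{g}^{-1}\hat{b}\sigma(\hat{g})=H\hat{b}$, and the extraction of $g'=v/u$ from the first column) check out.
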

\begin{proof}
The assumption on $g$ implies that $b\circ\sigma(g)=hg$ for some $h\in K_{\GL_n,d}(k)$. As $b$ is bounded by $\mu$ there is a constant $e$ only depending on $\mu$ such that $h^{-1}b=bh'$ for some $h'\in K_{\GL_n,d-e}(k)$ if $d>e$ (\cite{HV2}, Lemma 6.3). Applying the theorem to $bh'$ we obtain the corollary.
\end{proof}

\begin{remark}\label{remcoradmissible}
Similarly we obtain the following statement. Let $k$ be algebraically closed. Let $n>0$ and $\mu\in X_*(T)_{\dom}$. Then there is a $c>0$ with the following property. Let $\underline{\mathcal{G}}=(k[[z]]^{n},b\sigma)$ be a local shtuka bounded by $\mu$. Let $g:k[[z]]^{n}\rightarrow k[[z]]$ be such that $gb\in K_{\GL_1,d}(k)\sigma(g)K_{\GL_n,d}(k)$ for some $d>c$. Then there is an element of $gK_{\GL_{n},d-c}$ inducing a morphism between $\underline{\mathcal{G}}$ and $(k[[z]],1\cdot\sigma)$.
\end{remark}

\begin{remark}\label{remadm}
Let $\mathcal{B}=\bigcup_{\mu_1\preceq\mu'\preceq\mu_2}Kz^{\mu'}K$ for some fixed dominant $\mu_1\preceq\mu_2\in X_*(T)$ be the subscheme of $LG$ we are interested in for Theorem \ref{thm2}. Then by the Admissibility Theorem we obtain a $c>0$ such that the isomorphism class, and thus in particular the $\sigma$-conjugacy class of each element $b\in \mathcal{B}$ is well-defined by its image in $LG/K_c$. Thus the Newton strata in $\mathcal{B}$ are the inverse images of certain locally closed subschemes of $\mathcal{B}/K_c$, a finite-dimensional scheme. In particular, the notions of codimension and closure of a stratum are well-defined (and independent of the choice of $c$, provided that it is sufficiently large).
\end{remark}

A simpler local variant of admissibility is the following assertion that we use in the proof of Lemma \ref{lem17}. 
\begin{lemma}\label{remlocadm}
Let $\mu\in X_*(T)$ be dominant and let $R$ be the complete local ring of $Kz^{\mu}K$ in some point $b$. Let $\mathfrak{m}$ be its maximal ideal, let $n>0$ and let $y\in LG(R)$ be the universal element over $\Spec R/\mathfrak{m}^n$. For $d\in\mathbb{N}$ we denote by $\hat K_d$ the completion of $K_d$ in $1$. Then there is a $d$ depending on $\mu$ and $n$ such that every element of $y\hat K_d$ is $\sigma$-conjugate via $\hat K_0$ to $y$.
\end{lemma}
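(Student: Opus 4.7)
The plan is to construct $\alpha$ by successive approximation in the $\mathfrak{m}$-adic topology, each step being a single application of the Admissibility Theorem (\cite{HV1}, Theorem 10.1) at the residue field $k$ of $R$. Writing $\bar y\in Kz^{\mu}K(k)$ for the reduction of $y$, the Admissibility Theorem applied to the singleton $\{\bar y\}$ yields a constant $c=c(\mu)$ such that for every $d'\geq 0$, every perturbation in $K_{c+d'}(k)$ of $\bar y$ can be absorbed by a $\sigma$-conjugation by an element of $K_{d'}(k)$. I would take $d$ large enough (concretely, of order $nc$) that this absorption can be iterated $n$ times, losing at most $c$ in the $z$-adic filtration at each step.

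Given $h\in\hat K_d(R)$, set $\alpha_0:=1\in\hat K_0(R)$, which solves $\alpha_0^{-1}y\sigma(\alpha_0)=yh$ modulo $\mathfrak m$. Inductively, having constructed $\alpha_m\in\hat K_0(R)$ with
\[
\eta_m\ :=\ (yh)^{-1}\alpha_m^{-1}y\sigma(\alpha_m)\ \equiv\ 1\pmod{\mathfrak m^{m+1}}
\]
and $\eta_m\in K_{d-mc}(R)$, I would seek $\alpha_{m+1}=\alpha_m(1+\gamma_m)$ with $\gamma_m\in \mathfrak m^{m+1}\cdot\mathrm{Lie}(K_{d-(m+1)c})$. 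A short calculation linearizes the condition $\eta_{m+1}\equiv 1\pmod{\mathfrak m^{m+2}}$ to an affine-linear equation in $\gamma_m$ of the form
\[
\sigma(\gamma_m)-\bar y^{-1}\gamma_m\bar y\ =\ -\bar y^{-1}\xi_m,
\]
where $\xi_m$ is the leading-order part of $\eta_m-1$ in $\mathfrak m^{m+1}/\mathfrak m^{m+2}$.

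Solving this equation is the infinitesimal form of the Admissibility Theorem: applying Theorem 10.1 of \cite{HV1} to the square-zero extension $k\oplus \mathfrak m^{m+1}/\mathfrak m^{m+2}$ (equivalently, tensoring its proof with the $k$-vector space $\mathfrak m^{m+1}/\mathfrak m^{m+2}$) produces a $\gamma_m$ with the claimed $z$-adic control. Running the iteration for $n-1$ steps yields $\alpha=\alpha_{n-1}\in\hat K_0(R)$ with $\eta_{n-1}\equiv 1\pmod{\mathfrak m^n}$, i.e.\ $\alpha^{-1}y\sigma(\alpha)=yh$ in $LG(R/\mathfrak m^n)$. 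Since $y$ is only specified as the universal element over $\Spec R/\mathfrak m^n$, this is the required $\sigma$-conjugacy.

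The hard part will be the infinitesimal version of Theorem 10.1 of \cite{HV1} invoked at each stage. While not stated explicitly there, it should follow from an inspection of the original proof, whose arguments are linear in the perturbation once $\bar y$ is fixed, with the same constant $c=c(\mu)$ controlling the $z$-adic loss independently of the Artinian thickening. Verifying this uniformity is the one point in the argument that requires care; once it is in place, the choice $d>nc$ suffices and the iteration closes cleanly.
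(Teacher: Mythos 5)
Your approach is genuinely different from the paper's and, while it would probably work, it is both heavier and leaves a real gap. The paper's proof is short, self-contained, and does \emph{not} invoke the Admissibility Theorem at all. Its engine is the observation that $\sigma$ is the $q$-power Frobenius on $R$: writing $yg=(ygy^{-1})y$ and $\sigma$-conjugating $yg$ by $ygy^{-1}$ replaces $g$ by $\sigma(ygy^{-1})$, which has lost only $d_0=\max_{\alpha}\langle\mu,\alpha\rangle$ in the $z$-adic filtration (an elementary estimate $y\hat K_l\subseteq \hat K_{l-d_0}y$ coming from $y\in Kz^{\mu}K$) but has jumped from $\mathfrak m_{\hat K}$ to $\mathfrak m_{\hat K}^{q}$. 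Iterating $c=\lceil\log_q n\rceil$ times already kills the perturbation modulo $\mathfrak m^n$, so $d=cd_0$ suffices. Your scheme instead iterates $n$ times, one $\mathfrak m$-adic order per step, so your $d$ grows linearly in $n$ rather than logarithmically; that is still admissible for the lemma, but it is a symptom of not using the Frobenius.

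The real issue is the step you flag as ``the hard part.'' You propose to solve the twisted difference equation $\sigma(\gamma_m)-\bar y^{-1}\gamma_m\bar y=-\bar y^{-1}\xi_m$ at each level by an ``infinitesimal Admissibility Theorem'' over the square-zero extension $k\oplus\mathfrak m^{m+1}/\mathfrak m^{m+2}$. This is a non-trivial extension of \cite{HV1}, Theorem 10.1, which is stated only over algebraically closed \emph{fields}, and you do not supply the argument. But there is a more important point you miss: since $\gamma_m\in\mathfrak m^{m+1}\cdot\mathrm{Lie}(K)$ and $\sigma$ raises the $\mathfrak m$-adic order by a factor of $q\geq 2$, the term $\sigma(\gamma_m)$ already lies in $\mathfrak m^{q(m+1)}\subseteq\mathfrak m^{m+2}$, hence drops out of the congruence. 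Your linearized equation is therefore not a genuine $\sigma$-twisted equation at all; it reduces to the untwisted conjugation $\bar y^{-1}\gamma_m\bar y\equiv\xi_m$, which has an immediate explicit solution with $z$-adic loss $d_0$. Observing this would both close the gap in your argument and make the appeal to the Admissibility Theorem superfluous --- and it is precisely this observation, pushed further (iterate $\sigma$ to gain a factor of $q$ each time rather than only $1$), that gives the paper's streamlined proof.
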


\begin{proof}
Let $c$ be such that $q^c>n$. Let $d_0\geq \langle \mu,\alpha\rangle$ for all roots $\alpha$. As $y\in (Kz^{\mu}K)(R/\mathfrak{m}^n)$, we have $y\hat K_l\subseteq \hat K_{l-d_0} y$ for all $l\geq d_0$. Let now $d=cd_0$ and $g\in \hat K_d$. Then $g\equiv 1\pmod{\mathfrak {m}_{\hat K}}$ where $\mathfrak {m}_{\hat K}$ denotes the maximal ideal of $\mathcal{O}_{\hat K}$. Then $yg=(ygy^{-1})y$ is $\hat K_{d-d_0}$-$\sigma$-conjugate to $y\sigma(ygy^{-1})$ and $\sigma(ygy^{-1})\equiv 1\pmod{\sigma(\mathfrak {m}_K)={\mathfrak {m}_K}^q}$. Iterating this $c$ times we obtain the conclusion. 
\end{proof}

\subsection{Proof of Theorem \ref{thm1'}}

In this subsection we make the same assumptions as in Subsection \ref{subsec4.2}, in particular $G=\GL_n$.

\begin{proof}
Let $\underline{\mathcal{G}}=(K_S,b\sigma)$. In the course of the proof we will make several replacements of $(S,U_j,\underline{\mathcal{G}})$ by triples $(\tilde S,\tilde U,\tilde{\underline{\mathcal{G}}})$ of one of the following two types: In both cases, $\tilde S$ is an integral normal affine $S$-scheme of finite type, $\tilde{\underline{\mathcal{G}}}$ is the base change of $\underline{\mathcal{G}}$ to $\tilde{S}$ and 
\begin{enumerate}
\item $(\tilde{S},\tilde{U})$ is the normalization of $(S,U_j)$ in some finite field extension of the function field $\kappa(S)$ or
\item $\tilde U=U_j\times_S \tilde S$ is an open subscheme of $\tilde S$ and $\tilde U\rightarrow U_j$ is an isomorphism.
\end{enumerate}
By \cite{Vasiu}, Section 6.1.1, in both cases the map $U_j\rightarrow S$ is affine if and only if $\tilde U\rightarrow \tilde{S}$ is affine. Furthermore, $\tilde S$ is again excellent.

By \cite{HV1}, Lemma 3.13 there exists a $\mu$ such that $\underline{\mathcal{G}}$ is bounded by $\mu$. Let $l=l(\mu)$ be as in Proposition \ref{corslopefilt}. Let $\overline{\kappa}$ be an algebraic closure of $\kappa=\kappa(S)$. We consider the set of homomorphisms $g_{\overline{\kappa},d}:\overline{\kappa}[[z]]\rightarrow \mathcal{G}_{\overline{\kappa}}=\overline{\kappa}[[z]]^n$ such that the matrix coefficients of $g_{\overline{\kappa},d}$ generate an ideal containing $(z^l)$ and $b\sigma(g_{\overline{\kappa},d})\in K_{\GL_{n},d}(\overline{\kappa})g_{\overline{\kappa},d}K_{\GL_1,d}(\overline{\kappa})$. Let $c$ be as in Corollary \ref{coradmissible}. Then for each such homomorphism there is an element of $K_{\GL_{n},d-c}(\overline{\kappa})g_{\overline{\kappa},d}$ which induces a homomorphism of local shtukas $g_{\overline{\kappa}}:(\overline{\kappa}[[z]], 1\cdot\sigma)\rightarrow \underline{\mathcal{G}}_{\overline{\kappa}}$. Recall that exactly one of the Newton slopes of $\underline{\mathcal{G}}$ is equal to $0$. Hence the image of the induced morphism $\overline{\kappa}\dl z\dr\rightarrow\overline{\kappa}\dl z\dr ^n$ is the unique one-dimensional sub-vector space of $\overline{\kappa}\dl z\dr ^n$ where the slope is $0$. In particular, any two double cosets $K_{\GL_{n},d-c}(\overline{\kappa})g_{\overline{\kappa},d}K_{\GL_1,d-c}(\overline{\kappa})$ differ by an element of Aut$(\overline{\kappa}\dl z\dr, 1\cdot\sigma)\cong\mathbb{F}_q\dl z\dr$. Hence the set of double cosets satisfying the additional boundedness by $l$ is finite. We fix one double coset of some $g_{\overline{\kappa},d}$ which we choose in such a way that the ideal generated by the matrix coefficients of any representative is equal to $(z^l)$. It contains an element $\psi$ defined over a finite extension $\kappa'$ of $\kappa$. By using a replacement of $(S,U_j)$ of type (1) above, we may assume that $\psi$ is already defined over $\kappa$.

Let $V$ be a local ring of $U_j$ that is a discrete valuation ring. Let $d\in\mathbb{N}$. By our choice of $l$ there is a finite \'etale extension $V'$ of $V$ and a $g\in L\GL_n(V')$ bounded by some $\mu'=(\mu'_1,\dotsc,\mu'_{n})$ with $0\leq \mu'_i\leq l$ for all $i$ and such that $b'=g^{-1}b\sigma(g)\in LP(V')$ has only integral coordinates and that the image $b'_0$ in  the quotient $L\mathbb{G}_m(V')$ of $(L\GL_{n-1}\times L\mathbb{G}_m)(V')\cong LM(V')$ satisfies $b'_0\equiv 1\pmod{z^dV'[[z]]}$. Let $g':V'[[z]]\rightarrow V'[[z]]^n$ be the last column of $g$. Then the matrix coefficients of $g'$ are integral and generate an ideal containing $(z^l)$ and $g'b_0'= b\sigma(g')$, hence $b\sigma(g')\in K_{\GL_{n},d}(V')g'K_{\GL_1,d}(V')$. By composing $g'\in V'[[z]]^n$ with multiplication by a suitable element of $\mathbb{F}_q[[z]]$ we may assume that the class of $g'_{\overline\kappa}$ in $\overline\kappa[[z]]^n/z^{d-c}\overline\kappa[[z]]^n$ contains $\psi$. Recall that $\psi$ is defined over $\kappa$. As $V'\cap \kappa=V$ we obtain that the coset of $\psi$ contains an element $\psi_{d-c}\in V[[z]]^n/z^{d-c}V[[z]]^n$. As this holds for every local ring of $U_j$ that is a discrete valuation ring, we obtain $\psi_{d-c}\in \mathcal{O}_{U_j}[[z]]^n/z^{d-c}\mathcal{O}_{U_j}[[z]]^n$ by \cite{Matsumura}, Theorem 11.5. Now we apply \cite{Vasiu}, Lemma 6.1.5. Although this lemma is formulated for truncated Witt rings and not in our context, the proof only uses general algebraic geometry, and hence still works here. The lemma implies that after a replacement of type (2) above the morphism $\psi_{d-c}$ extends to a corresponding morphism $\psi_{d-c}:\mathcal{O}_S[[z]]/(z^{d-c})\rightarrow \mathcal{O}_S[[z]]^n/(z^{d-c})\mathcal{O}_S[[z]]^n$ over all of $S$. Let $\tilde c$ be a constant only depending on $\mu$ with $b(z^{d-c}\mathcal{O}_S[[z]]^n)\subseteq z^{d-c-\tilde c}\mathcal{O}_S[[z]]^n$ and $b^{-1}(z^{d-c}\mathcal{O}_S[[z]]^n)\subseteq z^{d-c-\tilde c}\mathcal{O}_S[[z]]^n$. Then $\psi_{d-c}$ satisfies $b\sigma(\psi_{d-c})\equiv\psi_{d-c}\pmod{z^{d-c-\tilde c}}$ because it satisfies a similar congruence over $\kappa$.

Repeating the same argument using the (bottom row of the) element $z^lg^{-1}$ instead of (the last column of) $g$, using maps $\psi':\overline\kappa[[z]]^n\rightarrow \overline\kappa[[z]]$ and replacing $d$ by $l+d$ we obtain a morphism $\psi'_{d-c}:\mathcal{O}_S[[z]]^n/(z^{d-c})\mathcal{O}_S[[z]]^n\rightarrow \mathcal{O}_S[[z]]/(z^{d-c})$ over all of $S$ such that the matrix coefficients of $\psi'_{d-c}$ generically generate an ideal containing $(z^l)$ and such that $\psi'_{d-c}b\equiv\sigma(\psi'_{d-c})\pmod{z^{d-c-\tilde c}}$. For this last congruence one also has to make use of the second assertion of Proposition \ref{corslopefilt}. The composition of $\psi'_{d-c}$ and $\psi_{d-c}$ is thus a morphism $$\psi'_{d-c}\circ \psi_{d-c}:\mathcal{O}_S[[z]]/(z^{d-c})\rightarrow \mathcal{O}_S[[z]]/(z^{d-c})$$ with $\sigma(\psi'_{d-c}\circ \psi_{d-c})\equiv \psi'_{d-c}\circ \psi_{d-c}\pmod{z^{d-c-2\tilde c}}$. The matrix of this composition is an element of $\mathcal{O}_S[[z]]/(z^{d-c})$ which generically has $z$-valuation at most $2l$. Corollary \ref{coradmissible} implies that there is a $c'$ only depending on $2l$ and hence on $\mu$, but not on $d$, such that its $K_{\GL_1,d-c-2\tilde c-c'}$-double coset contains an auto-isogeny of $(\kappa(S)[[z]], 1\cdot\sigma)=(\mathbb{F}_q[[z]],1\cdot\sigma)_{\kappa(S)}$ (assuming that we choose $d>c+2\tilde c+c'+2l$). As this means that it commutes with $\sigma$, it is defined over $\mathbb{F}_q$. The $z$-valuation of $\psi'_{d-c}\circ \psi_{d-c}$ is equal to that of the associated morphism $\mathcal{O}_S[[z]]/(z^{d-c-2\tilde c-c'})\rightarrow \mathcal{O}_S[[z]]/(z^{d-c-2\tilde c-c'})$ which is defined over $\mathbb{F}_q$. Hence the valuation is constant on all of $S$, and in particular at most $2l$ in each point. This implies similar boundedness statements (for the bound $2l$) for the ideals generated by the matrix coefficients of $\psi'_{d-c}$ and of $\psi_{d-c}$ individually. Let $x$ be a geometric point of $S$. Then by Corollary \ref{coradmissible} and Remark \ref{remcoradmissible} the double cosets of $(\psi'_{d-c})_x$ and $(\psi_{d-c})_x$ of level $d-c-2\tilde c-c'$ contain morphisms $\psi_x:(\mathcal{O}_S[[z]],1\cdot\sigma)_x\rightarrow \underline{\mathcal{G}}_x$ and $\psi'_x:\underline{\mathcal{G}}_x\rightarrow(\mathcal{O}_S[[z]],1\cdot\sigma)_x $ of local shtukas over $x$. As we choose $d>c+c'+2\tilde c+2l$, the description of the ideal generated by the matrix coefficients of $\psi'_{d-c}$ implies that $\psi'_x\neq 0$. As $(\mathcal{O}_S[[z]],1\cdot\sigma)_x$ has Newton slope $0$, at least one of the Newton slopes of $\underline{\mathcal{G}}_x$ is equal to 0. As the slope $0$ occurs precisely once in the Hodge polygon of $\underline{\mathcal{G}}_x$, the same has to hold for its Newton polygon. Hence after all replacements of types (1) and (2) made in the proof we are now in the situation where $U_j=S$ is an affine $S$-scheme, which is what we wanted to show.
\end{proof}

\subsection{Applications}

\begin{cor}\label{corthm1}
Let $S$ be an integral locally noetherian scheme and let $\underline{\mathcal{G}}=(\mathcal{G},\phi)$ be a local $G$-shtuka on $S$. Let $j\in J(\nu)$ where $\nu$ is the generic Newton point of $\underline{\mathcal{G}}$. Let $U_j$ be the open subscheme of $S$ defined by the condition that a point $x$ of $S$ lies in $U_j$ if and only if $\pr_{(j)}(\nu_{\underline{\mathcal{G}}}(x))=\pr_{(j)}(\nu)$. Then $U_j$ is an affine $S$-scheme.
\end{cor}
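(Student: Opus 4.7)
My plan is to prove Corollary \ref{corthm1} by \'etale descent from Theorem \ref{thm1}: since Theorem \ref{thm1} treats precisely the case of local $G$-shtukas with trivialized underlying $K$-torsor, and since $\mathcal{G}$ is \'etale-locally trivial, one should pull back along an \'etale cover that trivializes $\mathcal{G}$, apply Theorem \ref{thm1} there, and then descend the affineness.

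More concretely, I would first reduce to $S = \Spec A$ noetherian affine, as the assertion is Zariski-local on $S$, and then choose a faithfully flat quasi-compact \'etale cover $\pi : \tilde S \to S$ together with a trivialization of $\pi^{\ast}\mathcal{G}$. Such a trivialization identifies $\pi^{\ast}\underline{\mathcal{G}}$ with $(K_{\tilde S}, b\sigma)$ for some $b \in LG(\tilde S)$, and pulling back the open set gives $\tilde U_j := U_j \times_S \tilde S \subset \tilde S$. Since affineness of morphisms descends along fpqc covers, it suffices to prove that $\tilde U_j \to \tilde S$ is an affine morphism.

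The technical point is that $\tilde S$ is reduced but in general reducible, so Theorem \ref{thm1} does not apply directly. To get around this, I would decompose $\tilde S$ into its irreducible components $Z_1,\dotsc,Z_r$, each endowed with its reduced scheme structure and hence integral. Because $S$ is integral and $\pi$ is \'etale, every minimal prime of $\tilde S$ lies over the generic point $\eta$ of $S$, so each $Z_i$ dominates $S$; in particular, the Newton point of $b|_{Z_i}$ at the generic point of $Z_i$ equals $\nu$, and $j \in J(\nu)$ remains a generic break point. Theorem \ref{thm1} applied to $(Z_i, b|_{Z_i})$ then yields that $\tilde U_j \cap Z_i \to Z_i$ is affine for every $i$. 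To assemble these into the affineness of $\tilde U_j \to \tilde S$, let $V \subset \tilde S$ be any affine open; the finite surjective map $\bigsqcup_i (\tilde U_j \cap V \cap Z_i) \to \tilde U_j \cap V$ has affine source, so Chevalley's criterion (affineness under finite surjective morphisms) forces $\tilde U_j \cap V$ to be affine. Hence $\tilde U_j \to \tilde S$ is affine, and fpqc descent completes the proof.

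The only substantive input is Theorem \ref{thm1}; the rest is formal. The hard part, insofar as there is one, is the slightly fiddly reduction that allows Theorem \ref{thm1} to be applied on an \'etale cover of an integral scheme which need not itself be integral, and this is handled by the irreducible-component decomposition together with Chevalley's criterion as above.
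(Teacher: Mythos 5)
Your proof is correct, but it takes a genuinely different route from the paper's. The paper first reduces to the case $G=\GL_n$ exactly as in the passage from Theorem~\ref{thm1'} to Theorem~\ref{thm1} (replacing $\underline{\mathcal{G}}$ by its images under finitely many representations $\lambda_i$ chosen so as not to alter the Newton stratification); for $\GL_n$, \cite{HV1}~Proposition~2.3 gives that $K$-torsors are trivial \emph{Zariski}-locally, so one can immediately reduce to trivial $\mathcal{G}$ and cite Theorem~\ref{thm1}. No descent, no Chevalley, no component decomposition is needed. You instead bypass the $\GL_n$ reduction and work with general $G$, where only \'etale-local trivializations are available, which forces you to pull back along an \'etale cover $\tilde S\to S$ and then cope with $\tilde S$ being reducible. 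Your handling of this is sound: flatness of $\pi$ ensures every minimal prime of $\tilde S$ sits over $\eta$, so each irreducible component $Z_i$ (with reduced structure, hence integral and noetherian) still has generic Newton point $\nu$ and Theorem~\ref{thm1} applies to each $(Z_i,b|_{Z_i})$; the finite surjection $\bigsqcup_i(\tilde U_j\cap V\cap Z_i)\to\tilde U_j\cap V$ has affine source because closed subschemes and preimages under affine morphisms of affines are affine, and Chevalley's criterion (finite surjection from an affine onto a noetherian separated scheme forces the target to be affine) together with fpqc descent of affineness closes the argument. What the paper's route buys is brevity and the elimination of all descent machinery; what yours buys is that it never leaves the category of $G$-shtukas for the group at hand, at the modest price of the irreducible-component/Chevalley step. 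Both are valid proofs.
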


\begin{proof}
As in the proof of Theorem \ref{thm1} from Theorem \ref{thm1'} we see that it is enough to consider the case $G=\GL_n$. By \cite{HV1}, Proposition 2.3 a $K$-torsor is in that case locally trivial for the Zariski topology. As the statement of the corollary is also local, we may assume that $\mathcal{G}$ is trivial. Then $\phi$ corresponds to an element of $LG(S)$ and the corollary follows from Theorem \ref{thm1}. 
\end{proof}

The next corollary is the analog of de Jong--Oort's purity theorem for $p$-divisible groups. Its proof is the same as for example in \cite{Vasiu}, Remark 6.3(a) or \cite{HV1}, Corollary 7.5.
\begin{cor}\label{corpuredim}
Let $S$ be an integral, locally noetherian scheme of dimension $d$ and let $\underline{\mathcal{G}}$ be a local $G$-shtuka on $S$. Let $\nu$ be the generic Newton point and let $j\in J(\nu)$. Let $S_j\subset S$ be the complement of the open subscheme $U_j$ where $\pr_{(j)}(\nu_{\underline{\mathcal{G}}}(x))=\pr_{(j)}(\nu)$. Then $S_j$ is empty or pure of codimension $1$.
\end{cor}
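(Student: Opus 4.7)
The plan is to derive this corollary directly from Corollary \ref{corthm1} by invoking the classical purity property for complements of affine opens in locally noetherian schemes. First, since the Newton point of $\underline{\mathcal{G}}$ at the generic point $\eta$ of $S$ is by definition equal to $\nu$, we trivially have $\pr_{(j)}(\nu_{\underline{\mathcal{G}}}(\eta))=\pr_{(j)}(\nu)$, so $\eta \in U_j$. Hence $U_j$ is a dense open subscheme of $S$, and $S_j$ has codimension at least $1$ whenever it is non-empty. By Corollary \ref{corthm1} the inclusion $U_j \hookrightarrow S$ is affine.

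It then suffices to invoke the general fact that if $U\subseteq S$ is an open subscheme of a locally noetherian scheme with $U\to S$ affine, then every irreducible component of $S\setminus U$ has codimension exactly $1$. I would argue this by contradiction: assume $W\subseteq S_j$ is an irreducible component of codimension $\geq 2$ with generic point $\eta_W=\mathfrak{p}$. After shrinking to a small enough affine open neighborhood of $\eta_W$, we may assume $W$ is the only irreducible component of $S_j$ passing through $\eta_W$. A short verification then shows that
\[
U_j\times_S \Spec \mathcal{O}_{S,\eta_W}\;=\;\Spec \mathcal{O}_{S,\eta_W}\setminus\{\mathfrak{p}\mathcal{O}_{S,\eta_W}\};
\]
indeed, any proper generization $\mathfrak{q}\subsetneq\mathfrak{p}$ of $\eta_W$ in $S$ either lies in $U_j$, or lies in some component $W'\neq W$ of $S_j$, and the latter would force $W\subsetneq W'$, contradicting that $W$ is an irreducible component. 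Since affineness is preserved under base change, the punctured spectrum $\Spec \mathcal{O}_{S,\eta_W}\setminus\{\mathfrak{p}\mathcal{O}_{S,\eta_W}\}$ would then be affine.

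The main classical input — really the only non-formal step — is Hartshorne's theorem asserting that the punctured spectrum of a noetherian local ring of dimension $\geq 2$ is never affine, which follows from non-vanishing of local cohomology combined with the cohomological criterion for affineness. This contradicts the affineness obtained above and forces every irreducible component of $S_j$ to have codimension exactly $1$. The strategy is identical to the one used in \cite{Vasiu}, Remark 6.3(a), and in \cite{HV1}, Corollary 7.5, so I would simply cite one of these for the standard purity step rather than reproving it.
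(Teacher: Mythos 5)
Your argument is correct and is essentially the proof in the paper: both deduce from Corollary~\ref{corthm1} that $U_j\hookrightarrow S$ is affine and then invoke purity of the complement of a dense affine open in a locally noetherian scheme. The paper handles this last step by citing EGA~IV, Corollaire~21.12.7 (with a pointer to \cite{Vasiu}, Remark~6.3(a) and \cite{HV1}, Corollary~7.5, precisely the references you mention), whereas you unpack the same statement by localizing at the generic point of a putative high-codimension component and appealing to Grothendieck non-vanishing of $H^{\dim A}_{\mathfrak{m}}(A)$ together with Serre's cohomological criterion for affineness; this is a standard reformulation of the same classical input, not a different route.
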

\begin{proof}
By replacing $S$ by a suitable open subscheme, we may assume that $S$ is affine. Furthermore we may assume that $S_j\neq\emptyset$. By Theorem \ref{thm1}, $S\setminus S_j=U_j$ is affine. By EGA IV, Corollaire 21.12.7 this implies that $\codim(S_j)\leq 1$.
\end{proof}

We also obtain once more the usual purity for the Newton stratification associated with a local $G$-shtuka.
\begin{cor}[\cite{HV1}, Theorem 7.4]\label{cornewtonpure}
Let $S$ be an integral and locally noetherian $\mathbb{F}_q$-scheme and let $\underline{\mathcal{G}}$ be a local $G$-shtuka on $S$. Let $\nu$ be the Newton point in the generic point of $S$. Then the Newton stratification on $S$ defined by $\underline{\mathcal{G}}$ satisfies the purity property, that is the inclusion of the stratum $\mathcal{N}_{\nu}$ in $S$ is an affine morphism.
\end{cor}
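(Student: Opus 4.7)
The plan is to express the open Newton stratum $\mathcal{N}_{\nu}$ as the finite intersection $\bigcap_{j\in J(\nu)}U_j$, where the $U_j$ are the open subschemes produced by Corollary \ref{corthm1}, and then deduce affineness of $\mathcal{N}_{\nu}\hookrightarrow S$ from affineness of each $U_j\to S$.

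First I would establish the set-theoretic identity $\mathcal{N}_{\nu}=\bigcap_{j\in J(\nu)}U_j$. For any point $x\in S$, Grothendieck specialization gives $\nu_{\underline{\mathcal{G}}}(x)\preceq\nu$, and the Kottwitz point is constant on $S$, so both Newton points share the same image in $\pi_1(G)_{\mathbb{Q}}$. One direction is trivial: if the Newton point at $x$ equals $\nu$, then $x\in U_j$ for every $j$. For the converse, supposing $\nu_{\underline{\mathcal{G}}}(x)\prec\nu$ strictly, I would chain $\nu_{\underline{\mathcal{G}}}(x)=\nu_0\preceq\dotsm\preceq\nu_n=\nu$ by length-one steps via Remark \ref{remchai} (1), apply Remark \ref{remnew} to the final step $\nu_{n-1}\prec\nu$ to extract some $j\in J(\nu)$ with $\pr_{(j)}(\nu_{n-1})\neq\pr_{(j)}(\nu)$, and use monotonicity of $\pr_{(j)}$ under $\preceq$ (combined with the bound $\pr_{(j)}(\nu_{n-1})\preceq\pr_{(j)}(\nu)$ from $\nu_{n-1}\preceq\nu$) to conclude $\pr_{(j)}(\nu_{\underline{\mathcal{G}}}(x))\prec\pr_{(j)}(\nu)$, so $x\notin U_j$.

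The remaining step is purely formal. Since $J(\nu)$ is finite and each $U_j\to S$ is affine by Corollary \ref{corthm1}, I would verify affineness of $\mathcal{N}_{\nu}\to S$ on an affine open cover of $S$: over an affine open $V\subseteq S$, each $U_j\cap V$ is affine, and since $V$ is separated, the finite intersection $\mathcal{N}_{\nu}\cap V=\bigcap_{j\in J(\nu)}(U_j\cap V)$ is affine in $V$; this exhibits $\mathcal{N}_{\nu}\to S$ as an affine morphism.

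There is no real obstacle beyond Corollary \ref{corthm1}, which already embodies the non-trivial per-break-point purity input. What remains is only the combinatorial observation that the projections $\pr_{(j)}$ for $j\in J(\nu)$ jointly detect strict specializations of $\nu$ among Newton points with the same Kottwitz point, together with routine bookkeeping about finite intersections of affine opens in a separated scheme.
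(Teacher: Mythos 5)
Your proof is correct and follows essentially the same route as the paper: identify $\mathcal{N}_{\nu}$ with $\bigcap_{j\in J(\nu)}U_j$ and conclude via Corollary \ref{corthm1} that the inclusion is affine. The paper states the set-theoretic identity without elaboration and phrases the formal step as ``the composition of finitely many affine morphisms''; your chain argument via Remark \ref{remchai}(1), Remark \ref{remnew}, and monotonicity of $\pr_{(j)}$ is a sound (if slightly indirect) way to fill in the identity, and your verification of affineness on an affine cover is an equivalent formulation of the same formal observation.
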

\begin{proof}
For $j\in J(\nu)$ let $U_j$ be as above. Then $\mathcal{N}_{\nu}$ is the intersection of all $U_j$ for $j\in J(\nu)$. Hence by Corollary \ref{corthm1} the inclusion $\mathcal{N}_{\nu}\hookrightarrow S$ is the composition of finitely many affine morphisms and thus affine.
\end{proof}

\section{The Grothendieck conjecture}\label{secgrothconj}

We prove Theorem \ref{thm2} by reducing it to the following result.
\begin{thm}\label{thm2'}
Let $\mu_1\preceq\mu_2\in X_*(T)$ be dominant coweights. Let $S_{\mu_1,\mu_2}=\bigcup_{\mu_1\preceq\mu'\preceq \mu_2} Kz^{\mu'}K$. Let $b\in S_{\mu_1,\mu_2}(k)$ and let $R$ be the complete local ring of $S_{\mu_1,\mu_2}$ in $b$. For each $\sigma$-conjugacy class $[b']$ with $\nu_{b}\preceq\nu_{b'}\preceq \mu_2$ and $\kappa_G(b')=\kappa_G(b)$ let $\mathcal{N}_{\nu_{b'}}$ be the subscheme of $\Spec R$ where the $\sigma$-conjugacy class of the corresponding point of $S_{\mu_1,\mu_2}$ is equal to $[b']$. Then $\mathcal{N}_{\nu_{b'}}$ is non-empty and pure of codimension $\langle \rho, \mu_2-\nu_{b'}\rangle+\frac{1}{2}\defect(b')$ in $\Spec R$.
\end{thm}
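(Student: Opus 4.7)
I will prove the codimension formula by establishing two matching bounds along Chai's chain of Newton points from $\mu_2$ down to $\nu_{b'}$: a lower bound on codimension via the dimension formula for affine Deligne--Lusztig varieties \cite{GHKR, dimdlv}, and an upper bound via the purity theorem (Corollary \ref{corpuredim}), with non-emptiness emerging from the inductive argument for the upper bound.

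For the lower bound, I will work modulo a sufficiently deep $K_c$ (Remark \ref{remadm}) and identify $\mathcal{N}_{[b']} \cap S_{\mu_1,\mu_2}$, via the $\sigma$-conjugation map $g\mapsto g^{-1}b'\sigma(g)$, as the image of a $K/K_c$-bundle over the affine Deligne--Lusztig variety $X_{\preceq\mu_2}(b')$ modulo the action of the $\sigma$-centralizer $J_{b'}$. Combining $\dim X_{\preceq\mu_2}(b') = \langle\rho,\mu_2-\nu_{b'}\rangle - \tfrac12\defect(b')$ with $\dim S_{\mu_1,\mu_2}/K_c = \langle 2\rho,\mu_2\rangle + \dim K/K_c$ and the known dimension of $J_{b'}$, the resulting dimension count yields
$$\codim_{\Spec R}\mathcal{N}_{\nu_{b'}}\;\geq\;\langle\rho,\mu_2-\nu_{b'}\rangle + \tfrac12\defect(b')\;=\;\delta(\nu_{b'},\mu_2)$$
by Remark \ref{remchai} (2).

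For the upper bound I induct along a chain $\mu_2=\nu_0\succ\nu_1\succ\cdots\succ\nu_N=\nu_{b'}$ with $\delta(\nu_{i+1},\nu_i)=1$ and $\nu_{i+1}=\nu_i^{j_i}$ for some $j_i\in J(\nu_i)$, furnished by Remark \ref{remchai}~(1) and Remark \ref{remnew}. The base case $i=0$ is clear: $Kz^{\mu_2}K$ is open dense in $S_{\mu_1,\mu_2}$ (so meets $\Spec R$ in a dense open), and by Corollary \ref{cormazur} carries points of Newton type $\mu_2$, hence $\mathcal{N}_{\mu_2}$ is non-empty, open in $\Spec R$, of codimension $0$. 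For the inductive step, assuming $\mathcal{N}_{\nu_i}$ is non-empty and pure of codim $\delta(\nu_i,\mu_2)$, I apply Corollary \ref{corpuredim} component-wise to the closure $\overline{\mathcal{N}_{\nu_i}}$ (whose generic Newton point on each component is $\nu_i$): the closed sublocus $S_{j_i}$ where $\pr_{(j_i)}(\nu_x)\neq\pr_{(j_i)}(\nu_i)$ is either empty or pure of codim $1$. It is non-empty, because $b\in S_{j_i}$: indeed, $\nu_b\preceq\nu_{i+1}$ together with Lemma \ref{lemchai} yields $\pr_{(j_i)}(\nu_b)\preceq\pr_{(j_i)}(\nu_{i+1})\prec\pr_{(j_i)}(\nu_i)$. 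By Lemma \ref{lemchai} the largest Newton point on $S_{j_i}$ is $\nu_{i+1}$, while the lower bound forbids any $\overline{\mathcal{N}_{\nu''}}$ with $\nu''\prec\nu_{i+1}$ from filling a component of $S_{j_i}$ (its codim in $\Spec R$ would exceed that of $S_{j_i}$). Hence the generic Newton point on each component of $S_{j_i}$ equals $\nu_{i+1}$, so $\mathcal{N}_{\nu_{i+1}}$ is open dense in $S_{j_i}$, non-empty, pure, and of codim $\delta(\nu_i,\mu_2)+1=\delta(\nu_{i+1},\mu_2)$ in $\Spec R$, completing the induction.

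\textbf{Main obstacle.} The chief difficulty is the lower bound: translating the ADLV dimension formula (a statement about $LG/K$) into a codimension bound for the Newton stratum inside $\Spec R\subset S_{\mu_1,\mu_2}/K_c$ requires careful dimension bookkeeping involving the infinite-dimensional centralizer $J_{b'}$, the right $K/K_c$-action, and the admissibility framework of Remark \ref{remadm} and Lemma \ref{remlocadm}. A secondary subtlety is that $\overline{\mathcal{N}_{\nu_i}}$ need not be irreducible, so Corollary \ref{corpuredim} must be invoked component-wise; the lower bound is precisely what is needed to rule out anomalous generic Newton points on components of $S_{j_i}$ and thereby propagate purity through the induction.
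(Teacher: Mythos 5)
Your upper-bound induction is essentially the paper's argument: a decreasing induction along a $\delta$-chain from $\mu_2$ down to $\nu_{b'}$, applying the single-breakpoint purity (Corollary \ref{corpuredim}) at each step with $j$ chosen via Remark \ref{remnew}, identifying $\bigcup_{\eta\preceq\nu_i,\,\pr_{(j_i)}(\eta)\neq\pr_{(j_i)}(\nu_i)}\mathcal N_\eta$ with $\mathcal N_{\preceq\nu_{i+1}}$ by Lemma \ref{lemchai}, and then using the lower bound to rule out components with smaller generic Newton point. The paper does exactly this (it phrases it as an induction over the poset, but the content is the same).

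However, there are two genuine problems with your write-up. First, a circularity: you invoke Corollary \ref{cormazur} to seed the base case $\nu=\mu_2$. In this paper Corollary \ref{cormazur} is \emph{deduced from} Theorem \ref{thm2'} (see the proof of Theorem \ref{thm2} from Theorem \ref{thm2'}), so you cannot use it here. The paper instead obtains the base case from \cite{HV2}, Corollary 6.10(b) together with Lemma \ref{lem17}(2), i.e.\ from the fact that the generic Newton point of the universal deformation bounded by $\mu_2$ is $\mu_2$. (One could also avoid the citation by simply observing that $z^{\mu_2}\in Kz^{\mu_2}K$ has Newton point $\mu_2$, so the top stratum is a non-empty open, hence dense in the irreducible $S_{\mu_1,\mu_2}$; but in any case \emph{some} replacement for Corollary \ref{cormazur} is required, and you would still need to check that $\Spec R$ meets this open locus.)

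Second, your lower bound is a plan rather than a proof, and moreover it is a different plan from the paper's. You propose a global dimension count in $S_{\mu_1,\mu_2}/K_c$ using the ADLV dimension formula, the $K/K_c$-fibration of the $\sigma$-conjugation map, and the $\sigma$-centralizer $J_{b'}$; carrying this through requires the careful fibering and admissibility bookkeeping done in \cite{HV2} and would further require a separate argument (as in the paper's catenary/equicodimensionality discussion) to turn a global codimension bound into a codimension bound inside the local scheme $\Spec R$. The paper sidesteps all of this: it only needs the lower bound in the single extreme case $\nu=\nu_b$, where it quotes \cite{HV2}, Corollary 6.10(a) and Lemma \ref{lem17}(3) directly, and then \emph{propagates} the lower bound to all intermediate $\nu$ by a short contradiction argument using purity (Corollary \ref{cornewtonpure}): a minimal counterexample $\nu$ would, after passing through a codimension-one purity step to some $\nu'\prec\nu$, violate the bound for $\nu'$ since $\delta(\nu',\mu_2)\geq\delta(\nu,\mu_2)+1$. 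This propagation step is the key idea you are missing; with it, the hard dimension count need only be done once, at $\nu_b$, where \cite{HV2} has already done the work. I recommend you adopt it, both because it closes the gap and because it makes the purity theorem do double duty in a pleasant way.
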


For the following proof we use the dimension formula for affine Deligne-Lusztig varieties, so we briefly recall their definition. Let $b\in LG(k)$ and $\mu\in X_*(T)_{\dom}$. Then the affine Deligne-Lusztig variety associated with $b$ and $\mu$ is the locally closed subscheme $X_{\mu}(b)$ of $\Gr$ with $$X_{\mu}(b)(k)=\{g\in \Gr(k)\mid g^{-1}b\sigma(g)\in Kz^{\mu}K\}.$$ The union $X_{\preceq\mu}(b)=\bigcup_{\mu'\preceq\mu}X_{\mu'}(b)$ is a closed subscheme of $\Gr$ called the closed affine Deligne-Lusztig variety assoicated with $b$ and $\mu$.

\begin{proof}[Proof of Theorem \ref{thm2} using Theorem \ref{thm2'}]
Theorem \ref{thm2'} implies the statement about the closure of $\mathcal{N}_{\nu}$ in Theorem \ref{thm2}. Let us show that Theorem \ref{thm2'} together with Corollary \ref{cormazur} (which excludes the emptiness and thus leads to elements $b$ as in Theorem \ref{thm2'}) also implies the statement about codimensions in Theorem \ref{thm2}. Let $d>0$ be such that the Newton point of each element of $S_{\mu_1,\mu_2}$ is determined by its image in $S_{\mu_1,\mu_2}/K_d$. Then by definition the codimension in Theorem \ref{thm2} is the codimension of the image of $\mathcal{N}_{[b]}$ in $S_{\mu_1,\mu_2}/K_d$. A similar definition is used for $\mathcal{N}_{\nu_{b}}$ in $\Spec R$. Note that $S_{\mu_1,\mu_2}/K_d$ is an irreducible scheme of finite type over $k$, in particular $\dim S_{\mu_1,\mu_2}/K_d=\dim C/K_d+\codim\, C/K_d$ for every irreducible component $C$ of $\mathcal{N}_{[b]}$, see [EGA IV$_2$], Proposition 5.2.1. An analogous formula holds for the Newton strata in the completion of $S_{\mu_1,\mu_2}/K_d$ in $b$. Indeed, the completion of $S_{\mu_1,\mu_2}/K_d$ is equidimensional and catenary by \cite{HIO}, Theorem 18.17 and it is equicodimensional as it has only one closed point. As $S_{\mu_1,\mu_2}/K_d$ is noetherian, dimensions stay invariant under completion. Hence Theorem \ref{thm2'} together with Corollary \ref{cormazur} implies the statement about codimensions in Theorem \ref{thm2}. 

It remains to show that Theorem \ref{thm2'} implies Corollary \ref{cormazur}. We first consider the case that $b$ is basic. Let $w_1$ be the element of $\Omega$ whose image in $\pi_1(G)$ is equal to $\kappa_G(b)$ (cf. Section \ref{secnot}). Then $[b]$ contains an element $b_1\in LG(\mathbb{F}_q)$ which is a representative of $w_1$. Indeed, let $m>0$ such that $w_1^m$ is a translation element. Then $\ell(w_1^m)=0$, hence $w_1^m$ is central. As the Newton point of $b_1$ is equal to $\frac{1}{m}w_1^m\in X_*(T)_{\mathbb{Q}}$, we obtain that $b_1$ is basic. As $\kappa_G(b_1)=\kappa_G(b)$, we have $b_1\in [b]$. As $w_1$ is of length $0$, we have $w_1\leq z^{\mu}$ in the Bruhat order and hence $b_1\in \bigcup_{\mu'\preceq\mu}Kz^{\mu'}K$. In other words, $X_{\preceq\mu}(b)\neq\emptyset$. By \cite{GHKR}, 5.6 and 5.8 and \cite{dimdlv} we have $\dim X_{\preceq\mu}(b)\leq \langle \rho, \mu-\nu_b\rangle+\frac{1}{2}\defect(b)$ for every $\mu$ and every (not necessarily basic) $b$. Note that although \cite{GHKR} already uses Corollary \ref{cormazur} in some occasions, their proof of the upper bound on the dimension does not need this non-emptiness statement. By \cite{HV1} we know that each irreducible component of $X_{\preceq\mu}(b)$ has dimension at least $\langle \rho, \mu-\nu_b\rangle+\frac{1}{2}\defect(b)$. Thus for every pair $\mu,b$ we have $X_{\preceq\mu}(b)=\emptyset$ or $X_{\preceq\mu}(b)$ is equidimensional of that dimension and $X_{\mu}(b)$ is dense in $X_{\preceq\mu}(b)$ (compare \cite{HV1}, Theorem 1.2 and Corollary 1.3). In the basic case we already saw that $X_{\preceq\mu}(b)\neq\emptyset$. Hence $X_{\mu}(b)\neq\emptyset$ which proves the corollary for this case.

We now consider general elements $b$. Let $b_0\in G(\overline{\mathbb{F}}_q)$ be such that $b_0$ is basic and with $\kappa_G(b_0)=\kappa_G(b)$. Let $\nu'$ be its Newton point. Then $\nu'\preceq\nu\preceq\mu$, hence by what we saw above $[b_0]\cap Kz^{\mu}K\neq\emptyset$. We assume that $b_0$ is already in this intersection. Let $R$ be the complete local ring of $S_{\mu,\mu}$ at $b_0$. By Theorem \ref{thm2'} there is a point $b_1$ of $\Spec R\hookrightarrow S_{\mu,\mu}$ which is also in the $\sigma$-conjugacy class of $b$ as claimed.
\end{proof}

\begin{lemma}\label{lem17}
Let $\underline{\mathcal{G}}=(K_k,b\sigma)$ be a local $G$-shtuka over an algebraically closed field $k$. Let $\mu\in X_*(T)_{\dom}$ be such that $\underline{\mathcal{G}}$ is bounded by $\mu$. Let $R$ be the universal deformation ring for deformations of $\underline{\mathcal{G}}$ that are bounded by $\mu$. For each Newton point $\nu$ let $\mathcal{N}_{\nu}$ denote the corresponding Newton stratum in $\Spec R$. Let $R'$ be the complete local ring of $\bigcup_{\mu'\preceq\mu}Kz^{\mu'}K$ in the point $b$. For a Newton point $\nu$ let $\mathcal{N}_{\nu}'$ be the corresponding Newton stratum in $\Spec R'$ associated with the universal element in $\Spec R'\rightarrow LG$. Let $\hat K$ be the completion of $K$ at $1$.
\begin{enumerate}
\item There is an isomorphism $s':\Spec R\hat\otimes \mathcal{O}_{\hat K}\rightarrow \Spec R'$ such that image under $s'$ of each fiber of the projection $\Spec R\,\hat\otimes\, \mathcal{O}_{\hat K}\rightarrow\Spec R$ lies within one Newton stratum.
\item A point $x\in\mathcal{N}_{\nu}$ is contained in the closure of some $\mathcal{N}_{\nu'}$ if and only if $s'(x)\in\mathcal{N}_{\nu}'$ is contained in the closure of $\mathcal{N}_{\nu'}'$. 
\item The codimension of $\mathcal{N}_{\nu}'$ in $\Spec R'$ is greater than or equal to the codimension of $\mathcal{N}_{\nu}$ in $\Spec R$.
\end{enumerate}
\end{lemma}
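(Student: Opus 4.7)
The plan is to realize $\Spec R'$ as a formally trivializable $\hat K$-torsor over $\Spec R$ under the $\sigma$-conjugation action, and derive (1)--(3) from this structure. Recall from Section \ref{secshtuka} that $R$ pro-represents the functor of $\mu$-bounded deformations $(\underline{\mathcal{G}},\beta)$ of $\underline{\mathcal{G}}_0=(K_k,b\sigma)$. After choosing a trivialization of the underlying $K$-torsor, such a deformation over an Artinian $k$-algebra $A$ is encoded by an element $b'\in LG(A)$ lifting $b$ and bounded by $\mu$, modulo the equivalence $b'\sim k^{-1}b'\sigma(k)$ for $k\in\hat K(A)$ (this being precisely the data of an isomorphism of deformations compatible with $\beta$). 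By contrast, $\Spec R'$ parameterizes the space of such $b'$ without this identification. Sending $b'\in\Spec R'$ to the isomorphism class of $(K_{R'},b'\sigma)$ yields a natural morphism $\Spec R'\to\Spec R$, whose fibers are exactly the $\hat K$-$\sigma$-conjugation orbits.

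To construct $s'$, I will verify that this morphism is a formally trivializable $\hat K$-torsor. Iteratively applying the admissibility statement of Lemma \ref{remlocadm} to the universal element over $\Spec R'/\mathfrak{m}^n$ shows that any two nearby lifts representing the same deformation class are $\sigma$-conjugate by a unique element of $\hat K$, so the $\hat K$-action is free with quotient $\Spec R$. Choosing a compatible family of sections at each thickening and passing to the limit (possible by completeness of $R$ and formal smoothness of $\hat K$) provides the inverse isomorphism $(s')^{-1}:\Spec R'\to\Spec R\hat\otimes\mathcal{O}_{\hat K}$, and hence $s'$ itself. Since Newton points are $\sigma$-conjugation invariants, the image under $s'$ of any fiber of the projection $\pi:\Spec R\hat\otimes\mathcal{O}_{\hat K}\to\Spec R$ is a $\sigma$-conjugation orbit in $\Spec R'$ and therefore lies in a single Newton stratum, proving (1).

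For (2) and (3), observe that each Newton stratum $\mathcal{N}_{\nu'}'$ in $\Spec R'$ is $\hat K$-invariant (Newton points being constant on $\sigma$-conjugation orbits), so via $s'$ its preimage has the form $\pi^{-1}(S_{\nu'})$ for a locally closed subscheme $S_{\nu'}\subseteq\Spec R$. For every geometric point $y\in S_{\nu'}$, the elements of the fiber $s'(\pi^{-1}(y))\subseteq\mathcal{N}_{\nu'}'$ all represent the deformation class $y$, whose Newton point is therefore $\nu'$; this forces $S_{\nu'}\subseteq\mathcal{N}_{\nu'}$. Statement (2) follows: closures of Newton strata in $\Spec R'$ remain $\hat K$-invariant, hence of the form $\pi^{-1}(\overline{S_{\nu'}})$, and via the inclusion $S_{\nu'}\subseteq\mathcal{N}_{\nu'}$ correspond to closures of the $\mathcal{N}_{\nu'}$ in $\Spec R$. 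For (3), after passing to a finite-dimensional quotient by $K_c$ as in Remark \ref{remadm}, the codimension of $\mathcal{N}_{\nu'}'$ in $\Spec R'$ equals the codimension of $S_{\nu'}$ in $\Spec R$ (the extra $\hat K/\hat K\cap K_c$-factor being smooth of constant dimension), which is at least the codimension of $\mathcal{N}_{\nu'}$ in $\Spec R$ by the inclusion $S_{\nu'}\subseteq\mathcal{N}_{\nu'}$.

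The hardest part will be the torsor-trivialization step: one must verify that the iterated applications of Lemma \ref{remlocadm} assemble into a compatible family of $\hat K$-sections giving a global trivialization. This requires carefully balancing the admissibility level $d$ of Lemma \ref{remlocadm} against the thickening level $n$, and exploiting the fact that the universal element of $\Spec R'/\mathfrak{m}^n$ remains bounded by $\mu$ so that the lemma applies with a uniform bound on $d$ independent of $n$.
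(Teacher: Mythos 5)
Your high-level picture is the same as the paper's: $\Spec R'\to\Spec R$ should be a formally trivial $\hat K$-torsor for $\sigma$-conjugation, a trivialization gives the isomorphism $s'$, and then $\sigma$-conjugation-invariance of Newton points drives (1)--(3). However, the step you yourself flag as "the hardest part"---producing the trivialization---is where your proposal has a genuine gap, and the route you suggest for closing it would not work. You claim that because the universal element over $\Spec R'/\mathfrak m^n$ stays bounded by $\mu$, Lemma \ref{remlocadm} should apply with a bound $d$ uniform in $n$. That is false: inspecting the proof of Lemma \ref{remlocadm}, the level is $d=cd_0$ with $q^c>n$, so $d$ necessarily grows with the thickening order, and there is no uniform bound. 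The paper sidesteps this by building the trivializing element directly via a Frobenius-contraction recursion: set $g_0=1$ and $g_{n+1}=s(t(y))\sigma(g_n)y^{-1}$, where $s$ is the section coming from a trivialization of the universal local $G$-shtuka on $\Spec R$ and $t$ is the classifying map assembled from Lemma \ref{remlocadm}. The congruence $g_{n+1}^{-1}s(t(y))\sigma(g_{n+1})\equiv y$ improves from modulo $\mathfrak m^{q^n}$ to modulo $\mathfrak m^{q^{n+1}}$ because $\sigma$ raises the power of $\mathfrak m$ by a factor $q$, and the same mechanism gives uniqueness of $g_n$ under the normalization $g_n\equiv 1\pmod{\mathfrak m}$, hence convergence to $g\in\hat K(R')$. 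None of this is present in your proposal, and without it "the $\hat K$-action is free with quotient $\Spec R$" and "choosing a compatible family of sections" remain assertions rather than proofs.

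For (3), your argument is different from the paper's and, once (1) is available, is a reasonable alternative: you identify $(s')^{-1}(\mathcal N'_{\nu'})=\pi^{-1}(S_{\nu'})$ with $S_{\nu'}\subseteq\mathcal N_{\nu'}$, pass to the finite-dimensional quotient by $K_c$, and compare codimensions across the formally smooth projection. The paper instead uses a chain of $c$ generizations of $t(\tilde x)$ in strictly increasing Newton strata (\cite{HV1}, Corollary 7.7), pushes this chain through $s'$ and down to $(\Spec R')/\hat K_d$, and obtains $c$ pairwise distinct generizations of $x$. The chain argument is more robust in that it does not require you to justify that codimension is preserved under the completed tensor product with $\mathcal O_{\hat K/\hat K_c}$; if you keep your version, you should at least say explicitly that the projection $R\to R\hat\otimes\mathcal O_{\hat K/\hat K_c}$ is flat with equidimensional fibers and that the local rings in play are catenary, so that codimension is preserved.
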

\begin{proof}
By trivializing the universal local $G$-shtuka over $\Spec R$ we obtain a morphism $s:\Spec R\rightarrow \Spec R'$. Let $s':\Spec (R\hat\otimes \mathcal{O}_{\hat K})\rightarrow \Spec R'$ with $(x,g)\mapsto g^{-1}s(x)\sigma(g)$. To prove that $s'$ is an isomorphism we construct an inverse. Let $y\in LG(R')$ be the universal element and let $\mathfrak{m}_{R'}$ be the maximal ideal of $R'$. For all $n$ the element $y$ defines compatible deformations over $\Spec R'/\mathfrak{m}_{R'}^{q^n}$ of $\underline{\mathcal{G}}$ which are bounded by $\mu$. For every $d$ we denote by $\hat K_d$ the completion of $K_d$ in $1$. By Lemma \ref{remlocadm} each deformation of $y$ over some $\Spec R'/\mathfrak{m}_{R'}^{q^n}$ is constant (up to isomorphism) on $\hat K_{d(n)}$-cosets for some $d(n)$. Note that $\Spec (R'/\mathfrak{m}_{R'}^{q^n})/\hat K_{d(n)}$ is artinian. Hence the deformation induces a uniquely defined morphism $t_n:\Spec (R'/\mathfrak{m}_{R'}^{q^n})\rightarrow \Spec (R'/\mathfrak{m}_{R'}^{q^n})/\hat K_{d(n)}\rightarrow \Spec R$ to the universal deformation. In particular it is independent of the choice of $d(n)$, provided it is large enough. As the morphisms $t_n$ are compatible for varying $n$ they induce a morphism $t:\Spec R'\rightarrow \Spec R$. It satisfies that for all $n$ the element $y\in LG(R'/\mathfrak{m}_{R'}^{q^n})$ is $\sigma$-conjugate by an element $f_n$ of $\hat K(R'/\mathfrak{m}_{R'}^{q^n})$ to $s(t(y))\pmod{\mathfrak{m}_{R'}^{q^n}}$. To show that the elements $f_n$ are uniquely defined, that in the limit they define an element of $\hat K(R')$ and that this together with $t$ yields an inverse to $s'$ one uses the following explicit description of $f_n$: We define inductively $g_0=1$ and $g_{n+1}=s(t(y))\sigma(g_n)y^{-1}\in LG(R')$. By induction, $s(t(y))\equiv y\pmod{\mathfrak{m}_{R'}}$ implies that $y\equiv g_n^{-1}s(t(y))\sigma(g_n)\pmod{\mathfrak{m}_{R'}^{q^n}}$. Furthermore 
\begin{equation}\label{eqerg}
g_{n+1}\equiv g_ng_n^{-1}s(t(y))\sigma(g_n)y^{-1}\equiv g_n\pmod{\mathfrak{m}_{R'}^{q^n}}
\end{equation}
for all $n$. Inductively, one sees that $g_n\in LG(R'/\mathfrak{m}_{R'}^{q^n})$ is the unique element with $g_n\equiv 1\pmod{\mathfrak{m}_{R'}}$ and $y\equiv g_n^{-1}s(t(y))\sigma(g_n)\pmod{\mathfrak{m}_{R'}^{q^n}}$. In particular, $g_n=f_n$, hence $g_n\in\hat K(R'/\mathfrak{m}_{R'}^{q^n})$. By \eqref{eqerg} we see that the limit of the $g_n$ exists and thus defines an element $g\in \hat K(R')$. Together with $t$ this defines an inverse of $s'$.

By definition, the $K$-$\sigma$-conjugacy class of $g^{-1}s(x)\sigma(g)$, and thus in particular the Newton point, is constant on the fibers of the projection $\Spec R\hat\otimes \mathcal{O}_{\hat K}\rightarrow \Spec R$. Thus we proved (1). The second assertion follows directly from the above together with the property that there is a $d>0$ such that the Newton point of an element of $\bigcup_{\mu'\preceq\mu}Kz^{\mu'}K$ is determined by its image in $LG/K_d$, a consequence of the Admissibility Theorem. For the third assertion let $d$ be as above and let $\hat K_d$ be the completion of $K_d$ in $1$. Let $x$ be the generic point of some irreducible component of $\mathcal{N}'_{\nu}/\hat K_d$, and let $\tilde x$ be a lift to $LG$. Let $(t(\tilde{x}), g)$ be the image under the inverse of $s'$. If the irreducible component of $\mathcal{N}_{\nu}$ containing $t(\tilde{x})$ has codimension $c$ in $\Spec R$, there is a chain of generizations $y_0=t(\tilde{x}), y_1,\dotsc, y_c$ of $y_0$ with the following property: If $\nu_i$ denotes the Newton point of the universal local $G$-shtuka in $y_i$ then $\nu_0\preceq\dotsm\preceq\nu_i$ with $\nu_j\neq\nu_{j+1}$ for every $j$ (compare \cite{HV1}, Corollary 7.7). We consider the image in $(\Spec R')/\hat K_d$ of the chain of points $y_i$ under the composition of the morphism $\Spec R\rightarrow \Spec (R\hat\otimes \mathcal{O}_{\hat K})$ with $y\mapsto (y,g)$ for $g$ as above, the morphism $s'$, and the projection $\Spec R'\rightarrow (\Spec R')/\hat K_d$. We obtain a chain of $c$ generizations of $x$. They are contained in different Newton strata, thus in particular pairwise distinct. Hence the irreducible component of $x$ in $\mathcal{N}'_{\nu}$ has codimension at least $c$.
\end{proof}

\begin{proof}[Proof of Theorem \ref{thm2'}]
Let $R$ be as in the theorem. For $\nu_b\preceq\nu\preceq\mu_2$ let $\mathcal{N}_{\nu}\subseteq \Spec R$ be the Newton stratum of $\nu$ and let $\mathcal{N}_{\preceq\nu}=\bigcup_{\nu'\preceq\nu}\mathcal{N}_{\nu'}$. We have to show the following claim.

\noindent{\it Claim.} For all $\nu$ as above, $\mathcal{N}_{\preceq\nu}$ is pure of codimension $\delta(\nu,\mu_2)$ in $\Spec R$ and $\mathcal{N}_{\nu}$ is dense in $\mathcal{N}_{\preceq\nu}$. In particular, $\mathcal{N}_{\nu}$ is non-empty.

To prove the claim we first prove that $\codim~\mathcal{N}_{\preceq\nu}\geq\delta(\nu,\mu_2)$ for all $\nu$. We already know by \cite{HV2}, Corollary 6.10 (a) and Lemma \ref{lem17} (3) that this is true for $\nu=\nu_b$. Assume that there is a $\nu$ for which the inequality does not hold. We may in addition choose $\nu$ to be a minimal element with respect to $\preceq$ among all Newton points with that property. Let $C$ be an irreducible component of $\mathcal{N}_{\preceq\nu}$ of minimal codimension. Then by the minimality of $\nu$ the generic Newton point on $C$ is equal to $\nu$. Let $C'$ be an irreducible component of the complement of $\mathcal{N}_{\nu}$ in $C$ (which is nonempty as $C$ contains the special point of $\Spec R$ and $\nu\neq\nu_b$). By Theorem \ref{thm1} (or already by the usual purity for the Newton stratification as in Corollary \ref{cornewtonpure}) the codimension of $C'$ is equal to $\codim~C+1$. Let $\nu'$ be the generic Newton point on $C'$. Then $\nu'\preceq\nu$ with $\nu'\neq\nu$. Hence $\delta(\nu',\mu_2)\geq\delta(\nu,\mu_2)+1>\codim~C+1=\codim~C'$. This is a contradiction to the minimality of $\nu.$

It remains to show that for all $\nu$ the codimension of each irreducible component of $\mathcal{N}_{\preceq\nu}$ is at most $\delta(\nu,\mu_2)$ and that $\mathcal{N}_{\nu}$ is dense in $\mathcal{N}_{\preceq\nu}$. By \cite{HV2}, Corollary 6.10 (b)  and Lemma \ref{lem17} (2) this is true for $\nu=\mu_2$. We use decreasing induction (for the partial ordering $\preceq$). By Remark \ref{remchai} (\ref{remchai2}) it is enough to show that the above statement for $\nu$ implies the corresponding statement for $\nu'$ whenever $\nu_b\preceq\nu'\preceq\nu$ with $\delta(\nu',\nu)=1$, hence $\delta(\nu',\mu_2)=\delta(\nu,\mu_2)+1$. Let $j\in J(\nu)$ with $\pr_{(j)}(\nu)\neq\pr_{(j)}(\nu')$, compare Remark \ref{remnew}. Then $\bigcup_{\eta\preceq \nu, \pr_{(j)}(\eta)\neq\pr_{(j)}(\nu)}\mathcal{N}_{\eta}=\mathcal{N}_{\preceq\nu'}$. Furthermore, this subscheme is nonempty as it contains the closed point of $\Spec R$. By Corollary \ref{corpuredim}, $\mathcal{N}_{\preceq\nu'}$ is pure of codimension 1 in $\mathcal{N}_{\preceq\nu}$. Especially, the codimension in $\Spec R$ of each irreducible component is $\delta(\nu',\mu_2)=\delta(\nu,\mu_2)+1$. We already know that for all $\eta\preceq\nu'$ with $\eta\neq\nu'$ we have $\codim~\mathcal{N}_{\preceq\eta}\geq\delta(\eta,\mu_2)>\delta(\nu',\mu_2)$. Hence $\mathcal{N}_{\nu'}$ is dense in $\mathcal{N}_{\preceq\nu'}$.
\end{proof}


\begin{thebibliography}{GHKR2}
\bibitem[C]{Chai} C.-L.~Chai, {\it Newton polygons as lattice points}, Amer. J. Math. {\bf 122} (2000), 967--990.
\bibitem[F]{Faltings} G.~Faltings, {\it Algebraic loop groups and moduli spaces of bundles}, J.~Eur.~Math.~Soc. {\bf 5} (2003), 41--68.
\bibitem[Ga]{Gashi} Q.~Gashi, {\it On a conjecture of Kottwitz and Rapoport}, Ann. Sci. \'{E}cole Norm. Sup. {\bf 43} (2010), 1017--1038.
\bibitem[GHKR]{GHKR} U. G\"{o}rtz, Th. Haines, R. Kottwitz, D. Reuman, {\it Dimensions of some affine Deligne-Lusztig varieties}, Ann. Sci. \'{E}cole Norm. Sup. {\bf 39} (2006), 467--511.
\bibitem[GHKR2]{GHKR2} U. G\"{o}rtz, Th. Haines, R. Kottwitz, D. Reuman, {\it Affine Deligne-Lusztig varieties in affine flag varieties}, Compositio Math. {\bf 146} (2010), 1339--1382.
\bibitem[Gr]{Grothendieck} A.~Grothendieck, {\it Groupes de Barsotti-Tate et cristaux de Dieudonn\'e}, S\'eminaire de Math\'ematiques Sup\'erieures, No. 45 (Et\'e, 1970). Les Presses de l'Universit\'e de Montr\'eal, 1974. 
\bibitem[HV1]{HV1} U.~Hartl, E.~Viehmann, {\it The Newton stratification on deformations of local $G$-shtukas}, preprint, 2008, {\tt arXiv:0810.0821}, to appear in J. reine angew. Math. (Crelle).
\bibitem[HV2]{HV2} U.~Hartl, E.~Viehmann, {\it Foliations in deformation spaces of local $G$-shtukas}, preprint, 2010, {\tt arXiv:1002.2387}.
\bibitem[HIO]{HIO} M.~Herrmann, S.~Ikeda, U.~Orbanz, {\it Equimultiplicity and blowing up. An algebraic study. With an appendix by B. Moonen}, Springer-Verlag, Berlin, 1988.
\bibitem[JO]{JO} A. J. de~Jong, F.~Oort, {\it Purity of the stratification by Newton polygons}, J. Amer. Math. Soc. {\bf 13} (2000), 209--241.
\bibitem[Ka]{Katz} N.~M.~Katz, {\it Slope filtration of $F$-crystals}, Ast\'{e}risque {\bf 63} (1979), 113--164.
\bibitem[Ko1]{Kottwitz1} R.~E.~Kottwitz, {\it Isocrystals with additional structure}, Compositio Math. {\bf 56} (1985), 201--220.
\bibitem[Ko2]{Kottwitz2} R.~E.~Kottwitz, {\it Dimensions of Newton strata in the adjoint quotient of reductive groups},  Pure Appl. Math. Q. {\bf 2 } (2006), 817--836.
\bibitem[KR]{KR} R.~E.~Kottwitz, M.~Rapoport, {\it On the existence of $F$-crystals}, Comment. Math. Helv. {\bf 78} (2003), 153--184.
\bibitem[L]{Lucarelli} C.~Lucarelli, {\it A converse to Mazur's inequality for split classical groups}, J. Inst. Math. Jussieu {\bf 3} (2004), 165--183.
\bibitem[M]{Matsumura} H.~Matsumura, {\it Commutative ring theory}, second edition. Cambridge Studies in Advanced Mathematics, 8. Cambridge University Press, Cambridge, 1989.
\bibitem[O]{Oort1} F.~Oort, {\it Newton polygon strata in the moduli space of abelian varieties}, in: Moduli of abelian varieties (Texel Island, 1999),  417--440, Progr. Math., {\bf 195}, Birkh\"{a}user, Basel, 2001.
\bibitem[OZ]{OortZink} F.~Oort, T.~Zink, {\it Families of $p$-divisible groups with constant Newton polygon}, Doc.~Math. {\bf 7} (2002), 183--201.
\bibitem[RR]{RapoportRichartz} M.~Rapoport, M.~Richartz, {\it On the classification and specialization of $F$-isocrystals with additional structure}, Compositio Math.  {\bf 103} (1996), 153--181.
\bibitem[RZ]{RZ} M.~Rapoport, Th.~Zink, {\it Period spaces for $p$-divisible groups}, Princeton Univ. Press, 1996.
\bibitem[Va]{Vasiu} A.~Vasiu, {\it Crystalline boundedness principle}, Ann. Sci. \'{E}cole Norm. Sup. {\bf 39} (2006), 245--300.
\bibitem[Vi]{dimdlv} E.~Viehmann, {\it The dimension of some affine Deligne-Lusztig varieties}, Ann. Sci. \'{E}cole Norm. Sup. {\bf 39} (2006), 513--526.
\bibitem[Y]{yang} Y.~Yang, {\it An improvement of de Jong--Oort's purity theorem}, preprint, 2010, {\tt arXiv:1004.3090}, to appear in M\"unster Journal of Mathematics.
\bibitem[Z1]{Zink2} T.~Zink, {\it On the slope filtration}, Duke Math.~J.~{\bf 109} (2001), 79--95.
\bibitem[Z2]{Zink} T.~Zink, {\it De Jong-Oort purity for p-divisible groups}, in: Progress in Mathematics 270, Birkh\"{a}user, 2010.
\end{thebibliography}
\end{document}